\newtheorem{theorem}{Theorem}[section]
\newtheorem{lemma}[theorem]{Lemma}
\newtheorem{corollary}[theorem]{Corollary}
\newtheorem{proposition}[theorem]{Proposition}
\theoremstyle{definition}
\newtheorem{definition}[theorem]{Definition}
\newtheorem{example}[theorem]{Example}
\theoremstyle{remark}
\newtheorem{remark}[theorem]{Remark}
\newtheorem{claim}[theorem]{Claim}
\numberwithin{equation}{section}
\newcommand{\dH}{{\mathcal{H}}}
\newcommand{\dE}{{\mathcal{E}}}
\newcommand{\dF}{{\mathcal{F}}}
\newcommand{\dG}{{\mathcal{G}}}
\newcommand{\rk}{\textnormal{rk}}
\newcommand{\Hom}{\textnormal{Hom}}
\newcommand{\Symp}{\textnormal{Symp}}
\newcommand{\Image}{\textnormal{Im}}
\newcommand{\Aut}{\textnormal{Aut}}
\newcommand{\coker}{\textnormal{coker}}
\newcommand{\id}{\textnormal{Id}}
\begin{document}

\title{Stability of Special Generalized Null Correlation Bundles on $\mathbb{P}^{5}$}

%    Remove any unused author tags.

%    author one information
\author{}
\address{}
\curraddr{}
\email{}
\thanks{}

%    author two information
\author{Shijie Shang}
\address{Department of Mathematics, Statistics, and Computer Science, University of Illinois at Chicago, Chicago, IL 60607}
\curraddr{}
\email{sshang8@uic.edu}
\thanks{}

\subjclass[2010]{14D20, 14J60}

%\keywords{Symplectic vector bundles, Stable vector bundles, Moduli spaces}

\date{\today}

\dedicatory{}

\begin{abstract}
In this paper, we study special generalized null correlation bundles on $\mathbb{P}^{5}$. We prove that special generalized null correlation bundles on $\mathbb{P}^{5}$ are stable under some numerical conditions. Moreover, we prove that the closure of the subvariety parametrizing stable special generalized null correlation bundles is an irreducible component of the moduli space of rank $4$ stable vector bundles with corresponding Chern classes on $\mathbb{P}^{5}$. As an application, we prove that the number of irreducible components of moduli space of rank $4$ stable vector bundles on $\mathbb{P}^5$ with some fixed Chern classes can be arbitrarily high.
\end{abstract}

\maketitle

%%%%%%%%%%%%%%%%%%%%%%%%%%%%%%%%%%%%%%%%%%%%%%%%%
\section{Introduction}
A generalized null correlation bundle on $\mathbb{P}^3$ is the cohomology  bundle $\dE$ of the following monad
$$
\mathcal{O}_{\mathbb{P}^3}(-c)\rightarrow\mathcal{O}_{\mathbb{P}^3}(a)\oplus\mathcal{O}_{\mathbb{P}^3}(b)\oplus\mathcal{O}_{\mathbb{P}^3}(-a)\oplus\mathcal{O}_{\mathbb{P}^3}(-b)\rightarrow\mathcal{O}_{\mathbb{P}^3}(c)
$$
where $c>a\geq b\geq0$. If $a=b=0$ and $c=1$, one obtains the null correlation bundle constructed by Barth \cite{Barth}. $\dE$ is stable if and only if $c>a+b$. We will denote by $N(a,b,c)$ the variety parameterizing generalized null correlation bundles of the preceding type in the case $c>a+b$. 

Let $\mathcal{M}_{\mathbb{P}^N}(r;c_1,c_2,\cdots,c_r)$ be the moduli space of stable rank $r$ vector bundles on $\mathbb{P}^N$ with fixed Chern classes $c_1$, $c_2$, $\cdots$, $c_r$. Ein \cite{Ein88} proved that the closure of $N(a,b,c)$ is an irreducible component of $\mathcal{M}_{\mathbb{P}^3}(2;0,c^2-a^2-b^2)$. Moreover, every $p\in N(a,b,c)$ is a smooth point of $\mathcal{M}_{\mathbb{P}^3}(2;0,c^2-a^2-b^2)$. As an application, Ein \cite{Ein88} proved that there exist moduli spaces $\mathcal{M}_{\mathbb{P}^3}(2;0,t)$ with an arbitrarily high number of irreducible components. Ballico and Miró-Roig \cite{BM} generalized the result to projective threefold by using a finite map from a smooth projective threefold to $\mathbb{P}^3$. They proved that the number of irreducible components of the moduli space of rank 2 $H$-stable bundles $\dE$ on $X$ with fixed $c_1(\dE)=c_1$ and $c_2\cdot H=d$ tends to infinity as $d$ goes to infinity when $ac_1$ is numerically equivalent to the polarization $bH$ for some integers $a$ and $b$.  Ancona and Ottaviani \cite{AO95} proved that there exist moduli spaces $\mathcal{M}_{\mathbb{P}^5}(3;0,t,0)$ with an arbitrarily high number of irreducible components.

In this paper, we generalize the definition of the generalized null correlation bundle on $\mathbb{P}^{3}$ to $\mathbb{P}^{2n+1}$. A generalized null correlation bundle $\dE$ on $\mathbb{P}^{2n+1}$ is the cohomology bundle of the following monad
$$
\mathcal{O}_{\mathbb{P}^{2n+1}}(-c)\xrightarrow{\beta}(\oplus^{n+1}_{i=1}\mathcal{O}_{\mathbb{P}^{2n+1}}(a_{i}))\oplus(\oplus^{n+1}_{i=1}\mathcal{O}_{\mathbb{P}^{2n+1}}(-a_{i}))\xrightarrow{\alpha}\mathcal{O}_{\mathbb{P}^{2n+1}}(c)
$$
where $c>a_{1}\geq\cdots\geq a_{n+1}$ are all nonnegative integers. We will specify the forms of $\alpha$ and $\beta$ such that the vector bundle $\dE$ is symplectic and call this kind of bundle special generalized null correlation bundle (See Definition \ref{def2.1}). First, we prove the stability of special generalized null correlation bundles on $\mathbb{P}^5$ under some numerical conditions.
\begin{theorem}\textnormal{(See Theorem \ref{thm3.5}).}
Let $\dE$ be a special generalized null correlation bundle on $\mathbb{P}^{5}$ with $a_1\geq a_2\geq a_3\geq 0$ and $c>2a_1+a_2$. Then $\dE$ is stable.
\end{theorem}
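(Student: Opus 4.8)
The plan is to verify stability through the slope criterion: since the paired summands $\mathcal{O}(a_i),\mathcal{O}(-a_i)$ and the outer terms $\mathcal{O}(\pm c)$ give $c_1(\dE)=0$, we have $\mu(\dE)=0$, and it suffices to show that every saturated subsheaf $\dF\subset\dE$ with $1\le\rk\dF\le 3$ has $c_1(\dF)<0$. Writing $\mathcal{B}=(\oplus_{i=1}^{3}\mathcal{O}(a_i))\oplus(\oplus_{i=1}^{3}\mathcal{O}(-a_i))$ and $\mathcal{K}=\ker\alpha$, the display of the monad breaks into the two short exact sequences $0\to\mathcal{O}(-c)\to\mathcal{K}\to\dE\to 0$ and $0\to\mathcal{K}\to\mathcal{B}\xrightarrow{\alpha}\mathcal{O}(c)\to 0$, which reduce every cohomological statement to the cohomology of line bundles on $\mathbb{P}^5$, where all intermediate cohomology vanishes. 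Because $\dE$ is symplectic, $\dE\cong\dE^{\vee}$; hence a saturated rank-$3$ destabilizing subsheaf is dual to a rank-$1$ destabilizing quotient, i.e.\ a rank-$1$ subsheaf of $\dE^{\vee}\cong\dE$, so the cases $\rk\dF=1,3$ both amount to proving $H^0(\dE)=0$, while the case $\rk\dF=2$ is the delicate one.

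First I would dispose of the rank-$1$ (hence rank-$3$) case. A sub-line-bundle $\mathcal{O}(d)\hookrightarrow\dE$ with $d\ge 0$ produces a nonzero section of $\dE(-d)$ and hence of $\dE$, so it is enough to show $H^0(\dE)=0$. From the two sequences above together with $H^0(\mathcal{O}(-c))=H^1(\mathcal{O}(-c))=0$ one gets $H^0(\dE)\cong\ker\big(H^0(\mathcal{B})\xrightarrow{\alpha}H^0(\mathcal{O}(c))\big)$, so an element of $H^0(\dE)$ is a syzygy $\sum_i f_i\alpha_i=0$ among the components $\alpha_i\in H^0(\mathcal{O}(c-a_i))$ of $\alpha$, with $\deg f_i=a_i$. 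For the special (symplectic) choice of $\alpha$ the $\alpha_i$ form a regular sequence, so every such syzygy is Koszul; the lowest-degree Koszul relation between $\alpha_i$ and $\alpha_j$ needs a coefficient of degree $a_i+a_j-c$, which is negative once $c>a_1+a_2$. As $a_1\ge 0$, the hypothesis $c>2a_1+a_2$ forces $H^0(\dE)=0$.

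The rank-$2$ case is where self-duality is both a help and the source of the main difficulty. The symplectic form is a nowhere-degenerate section of $\wedge^2\dE^{\vee}\cong\wedge^2\dE$, so $H^0(\wedge^2\dE)\ne 0$; crucially, though, a nondegenerate $2$-form on a rank-$4$ bundle is \emph{indecomposable}, so this section does not arise as $\det\dF$ of a rank-$2$ subsheaf and cannot destabilize. Thus the correct statement is that for every $d\ge 0$ the group $H^0(\wedge^2\dE(-d))$ contains no locally decomposable section (those with $\eta\wedge\eta=0$ in $\wedge^4\dE(-2d)=\mathcal{O}(-2d)$), since exactly these correspond to rank-$2$ saturated subsheaves of degree $d$. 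Applying $\wedge^2$ to the display gives $0\to\dE(-c-d)\to\wedge^2\mathcal{K}(-d)\to\wedge^2\dE(-d)\to 0$ together with $0\to\wedge^2\mathcal{K}(-d)\to\wedge^2\mathcal{B}(-d)\xrightarrow{\iota_\alpha}\mathcal{K}(c-d)\to 0$, where $\iota_\alpha$ is interior multiplication by $\alpha$. Using $H^0(\dE(-c-d))=0$ and the computation $H^1(\dE(-c))\cong\mathbb{C}$, $H^1(\dE(-c-d))=0$ for $d\ge 1$, everything collapses to the single statement $H^0(\wedge^2\mathcal{K}(-d))=0$ for all $d\ge 0$, equivalently the injectivity of the maps $H^0(\wedge^2\mathcal{B}(-d))\xrightarrow{\iota_\alpha}H^0(\mathcal{K}(c-d))$. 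For $d=0$ this injectivity forces $H^0(\wedge^2\dE)\hookrightarrow H^1(\dE(-c))\cong\mathbb{C}$, so $H^0(\wedge^2\dE)$ is spanned by the (indecomposable) symplectic form and is harmless, while for $d\ge 1$ it yields outright vanishing; the top summand of $\wedge^2\mathcal{B}(-d)$ being $\mathcal{O}(a_1+a_2-d)$ and the analysis of $\iota_\alpha$ in terms of the components of $\alpha$ is precisely what makes $c>2a_1+a_2$ the exact threshold.

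The main obstacle is this last step: controlling $\ker\iota_\alpha$ on global sections, i.e.\ showing $H^0(\wedge^2\mathcal{K}(-d))=0$ for $d\ge 0$. Unlike the rank-$1$ case, here one must use the explicit special form of $\alpha$ (and its compatibility with $\beta$ imposed by the symplectic structure) to rule out \emph{decomposable} sections, carefully separating the unavoidable one-dimensional contribution of the symplectic form from any genuinely destabilizing section; this is the computation in which the numerical hypothesis $c>2a_1+a_2$ is both used and seen to be sharp.
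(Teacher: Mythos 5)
Your architecture is sound and is, in substance, the stability criterion that the paper simply imports: the paper's proof of Theorem \ref{thm3.5} consists of verifying that $\dE$ is simple (Proposition \ref{prop2.12}(i)), symplectic (Proposition \ref{PropR1}(ii)), and has $H^0(\dE)=0$ (Corollary \ref{cor2.7}(iv)), and then citing Theorem 3.5 of Ancona--Ottaviani, whose proof is exactly your case analysis: rank $1$ and $3$ subsheaves are killed by $H^0(\dE)=0$ together with self-duality, and a rank $2$ saturated subsheaf of nonnegative degree would produce a decomposable section of $\wedge^2\dE(-d)$ for some $d\ge 0$, which is impossible because the only section of $\wedge^2\dE$ is the nondegenerate symplectic form. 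The real difference is how the $\wedge^2$ input is obtained. The paper gets it for free from simplicity: $h^0(\dE\otimes\dE)=h^0(\wedge^2\dE)+h^0(S^2\dE)=1$, with the symplectic form accounting for the $1$; and simplicity is precisely where $c>2a_1+a_2$ enters, via $h^0(\dF\otimes\dH)=0$ in Corollary \ref{cor2.7}(i). You instead compute $H^0(\wedge^2\dE(-d))$ directly from the display.

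Two remarks on the step you flag as the ``main obstacle.'' First, it is not an obstacle: writing $\dF,\dH$ for your $\mathcal{K},\mathcal{B}$, the group $H^0(\wedge^2\dF(-d))=\ker\bigl(H^0(\wedge^2\dH(-d))\to H^0(\dF(c-d))\bigr)$ is, by the exactness of the Koszul complex (\ref{LES2}) of the regular sequence $f_1,\dots,g_{3}$ --- the same fact you already invoked in the rank-$1$ case --- identified with the degree-$(c-d)$ piece of the image of $\wedge^3 H\otimes S(-2c)$, whose top graded summand is $S(a_1+a_2+a_3-2c)$; this vanishes in degree $c-d$ as soon as $c+d>a_1+a_2+a_3$, which holds for every $d\ge 0$ under your hypotheses. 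Nothing about the special shape of $\alpha$ beyond regularity of the sequence is needed, and there are no decomposable sections to separate at this stage: the whole group is zero. Second, and consequently, your assertion that $c>2a_1+a_2$ is ``the exact threshold'' for this computation is not right: this step needs only $c>a_1+a_2+a_3$, which is weaker since $a_1\ge a_3$. The bound $2a_1+a_2$ is the threshold for simplicity, which your argument bypasses entirely. So your proof, once the routine Koszul computation is filled in, is correct --- and in fact yields stability under the weaker hypothesis $c>a_1+a_2+a_3$ --- but as written it stops short of the one computation it acknowledges it needs.
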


Let $t_i:=c_i(\dE)$, for $i=1,2,3,4$, and we denote by $N(c,a_1,a_2,a_3)$ the subvariety of $\mathcal{M}_{\mathbb{P}^{5}}(4;t_1,t_2,t_3,t_{4})$ parametrizing special generalized null correlation bundles on $\mathbb{P}^{5}$. By the classification theorem of self-dual monads with symplectic structure due to Barth and Hulek \cite{BH78}, we compute the dimension of $N(c,a_1,a_2,a_3)$. Meanwhile, we show that, for any $p\in N(c,a_1,a_2,a_{3})$, $h^1(\mathbb{P}^{5},\dE nd\,\dE_p)$ is exactly equal to $\dim N(c,a_1,a_2,a_{3})$, where $\dE_p$ is the special generalized null correlation bundle corresponding to $p$. Hence we can deduce the following theorem.
\begin{theorem}\label{Thm1.2}\textnormal{(See Theorem \ref{thm4.9})}.
For $a_1\geq a_2\geq a_3\geq 0$ and $c>5a_1$,\\
(i) the closure of $N(c,a_1,a_2,a_{3})$ in $\mathcal{M}_{\mathbb{P}^{5}}(4;t_1,t_2,t_3,t_{4})$ is an irreducible component of $\mathcal{M}_{\mathbb{P}^{4}}(4;t_1,t_2,t_3,t_{4})$;\\
(ii) if $p\in N(c,a_1,a_2,a_{3})$, then $p$ is a smooth point of $\mathcal{M}_{\mathbb{P}^{5}}(4;t_1,t_2,t_3,t_{4})$.
\end{theorem}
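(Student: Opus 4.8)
The plan is to deduce both parts from the standard comparison between the dimension of $N(c,a_1,a_2,a_3)$ and the Zariski tangent space to the moduli functor. Since $c>5a_1\ge 2a_1+a_2$, Theorem \ref{thm3.5} guarantees that every bundle parametrized by $N$ is stable, so $N$ is genuinely a subvariety of $\mathcal{M}_{\mathbb{P}^{5}}(4;t_1,t_2,t_3,t_4)$, and for a stable bundle $\dE_p$ the tangent space at the corresponding point is $\mathrm{Ext}^1(\dE_p,\dE_p)=H^1(\mathbb{P}^{5},\dE nd\,\dE_p)$ (on $\mathbb{P}^{5}$ the trace-free and full endomorphism versions agree, as $H^1(\mathbb{P}^{5},\mathcal{O})=0$). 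Consequently
\[
\dim N(c,a_1,a_2,a_3)\le\dim_p\mathcal{M}_{\mathbb{P}^{5}}(4;t_1,t_2,t_3,t_4)\le h^1(\mathbb{P}^{5},\dE nd\,\dE_p),
\]
where the left inequality is because $N$ is irreducible and contained in the moduli space. Hence, once I establish that $N$ is irreducible and that $h^1(\mathbb{P}^{5},\dE nd\,\dE_p)=\dim N$ for every $p\in N$, both inequalities collapse to equalities: $\dim_p\mathcal{M}=h^1=\dim T_p\mathcal{M}$ forces $p$ to be a smooth point, giving (ii), and since $\mathcal{M}$ is then smooth at $p$ of dimension $\dim N$ while $\overline{N}$ is irreducible of that same dimension through $p$, the component of $\mathcal{M}$ through $p$ must coincide with $\overline{N}$, giving (i). A pleasant feature of this route is that it never requires computing the obstruction space $H^2(\mathbb{P}^{5},\dE nd\,\dE_p)$ directly; smoothness is read off a posteriori.

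It remains to carry out two computations. The first is the dimension and irreducibility of $N$. Here I would invoke the Barth--Hulek classification \cite{BH78} of self-dual monads carrying a symplectic structure to realize $N$ as the image of a parameter space of admissible maps $\alpha$ (with $\beta$ determined by $\alpha$ through the symplectic form on the middle term) modulo the group of automorphisms of the monad, namely the graded symplectic automorphisms of the middle term together with the scalars acting on $\mathcal{O}_{\mathbb{P}^{5}}(\pm c)$. Counting the entries of $\alpha$ as sections of the line bundles $\mathcal{O}_{\mathbb{P}^{5}}(c\pm a_i)$ and subtracting the dimension of this acting group yields an explicit value for $\dim N$. Irreducibility is automatic from this description: the space of admissible $\alpha$ is an open subvariety of a product of the vector spaces of such sections, hence irreducible, and $N$ is its image under the algebraic cohomology-bundle construction followed by passage to the moduli point.

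The second and genuinely hard computation is $h^1(\mathbb{P}^{5},\dE nd\,\dE_p)$. Here I would use the self-duality $\dE_p\cong\dE_p^{\vee}$ furnished by the symplectic structure, so that $\dE nd\,\dE_p\cong\dE_p\otimes\dE_p$, and then work through the two short exact sequences obtained by splitting the monad,
\[
0\to \ker\alpha\to \mathcal{B}\xrightarrow{\alpha}\mathcal{O}_{\mathbb{P}^{5}}(c)\to 0,\qquad 0\to\mathcal{O}_{\mathbb{P}^{5}}(-c)\xrightarrow{\beta}\ker\alpha\to\dE_p\to 0,
\]
where $\mathcal{B}=\big(\bigoplus_{i=1}^{3}\mathcal{O}_{\mathbb{P}^{5}}(a_i)\big)\oplus\big(\bigoplus_{i=1}^{3}\mathcal{O}_{\mathbb{P}^{5}}(-a_i)\big)$. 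Tensoring these sequences with $\dE_p$, and threading the resulting long exact sequences against the cohomology of the twists $\dE_p(k)$, reduces everything to the cohomology of line bundles $\mathcal{O}_{\mathbb{P}^{5}}(m)$, which is completely explicit. The hypothesis $c>5a_1$ is precisely what is needed to annihilate the intermediate cohomology groups that would otherwise survive, so that the remaining terms assemble into exactly the parameter count of the previous step. I expect the main obstacle to lie in the bookkeeping of this diagram chase: one must propagate the long exact sequences through both extensions simultaneously, check each vanishing range against the bound $c>5a_1$, and confirm that the connecting homomorphisms behave so that no spurious $H^1$-contribution remains. Verifying the final identity $h^1(\mathbb{P}^{5},\dE nd\,\dE_p)=\dim N(c,a_1,a_2,a_3)$ then closes the argument and yields both (i) and (ii).
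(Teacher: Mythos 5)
Your proposal follows essentially the same route as the paper: the paper's Theorem \ref{thm4.9} is deduced exactly by comparing $\dim N(c,a_1,a_2,a_3)$ (computed in Proposition \ref{thm4.4} via the Barth--Hulek classification of self-dual symplectic monads and the quotient by $\Aut(\mathcal{O}(-c))\times\Symp(\dH,J_6)$) with $h^1(\dE nd\,\dE_p)$ (computed in Proposition \ref{thm4.1} by tensoring the display sequences with $\dE$ and $\dG$ and chasing long exact sequences), and observing that equality forces smoothness and that $\overline{N}$ is a whole component. The only small imprecision is attributing all the needed cohomology vanishing to $c>5a_1$; in the paper the vanishings come from $c>2a_1+a_2$ (Corollary \ref{cor2.7}), while $c>5a_1$ is what guarantees stability of $\dF$ and $\dG$, which is also used in the computation.
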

\begin{remark}
Ein \cite{Ein88} proved similar results for $\mathbb{P}^3$, but his method depends on Rao's theorem \cite{Rao} and the Serre construction between rank 2 vector bundles and codimension 2 subvarieties, which may not be generalized to higher rank cases. 
\end{remark}

 As an application of Theorem \ref{Thm1.2}, we prove the following proposition.
\begin{proposition}\textnormal{(See Proposition \ref{prop6.8})}.
Suppose that $\mathcal{M}_{\mathbb{P}^5}(4;0,s,0,t)=X_1\cup X_2\cdots\cup X_{m_{s,t}}$ where $X_i$'s are distinct irreducible components of $\mathcal{M}_{\mathbb{P}^5}(4;0,s,0,t)$. Then there exists a sequence of pairs $\{(s_i,t_i)\in\mathbb{Z}^2|i=1,2,\cdots\}$ such that 
$$\lim_{i\rightarrow+\infty} m_{s_i,t_i}=+\infty.
$$ 
\end{proposition}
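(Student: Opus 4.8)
The plan is to turn the statement into a counting problem for the admissible parameters and to control it by elementary number theory. First I would record the numerical data. Writing $\sigma_1=a_1^2+a_2^2+a_3^2$ and $\sigma_2=a_1^2a_2^2+a_1^2a_3^2+a_2^2a_3^2$, the total Chern class of a special generalized null correlation bundle $\dE$ on $\mathbb{P}^5$ is read off from its monad as $c(\dE)=\prod_{i=1}^{3}(1-a_i^2H^2)/(1-c^2H^2)$; since the bundle is symplectic the odd classes vanish, and expanding modulo $H^6$ gives $c_1=c_3=0$, $c_2=\big(c^2-\sigma_1\big)H^2$ and $c_4=\big(c^4-c^2\sigma_1+\sigma_2\big)H^4$. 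Thus a tuple $(c,a_1,a_2,a_3)$ lands in $\mathcal{M}_{\mathbb{P}^5}(4;0,s,0,t)$ precisely when $s=c^2-\sigma_1$ and $t=c^4-c^2\sigma_1+\sigma_2=s^2+s\sigma_1+\sigma_2$. By Theorem \ref{Thm1.2}, every admissible tuple (i.e.\ $a_1\ge a_2\ge a_3\ge 0$ and $c>5a_1$) produces an irreducible component $\overline{N(c,a_1,a_2,a_3)}$ of this moduli space, so $m_{s,t}$ is bounded below by the number of admissible tuples realizing $(s,t)$, provided distinct tuples give distinct components.

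Second, I would establish that distinctness. The point is that the numerical type of the monad can be recovered from $\dE$ itself: the integers $c$ and $\{a_1,a_2,a_3\}$ are precisely the twists occurring in the essentially unique minimal monad attached to $\dE$, which in the self-dual symplectic setting is governed by the Barth--Hulek classification \cite{BH78}; equivalently they are detected by the dimensions $h^\bullet(\mathbb{P}^5,\dE(k))$ of twists. Hence the loci $N(c,a_1,a_2,a_3)$ for distinct admissible tuples are pairwise disjoint, and so their closures are pairwise distinct irreducible components. Consequently $m_{s,t}$ is at least the number of admissible tuples with Chern data $(0,s,0,t)$, and it remains to make this number large.

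Third --- and this is where the real work lies --- I would produce, for every $N$, a pair $(s,t)$ admitting at least $N$ admissible tuples. Here the stability constraint $c>5a_1$ dictates the relevant regime: from $\sigma_1\le 3a_1^2<3c^2/25$ one gets $s=c^2-\sigma_1\in(\tfrac{22}{25}c^2,c^2)$, so $c$ must lie in the short window $(\sqrt s,\sqrt{25s/22}\,)$, which contains on the order of $\sqrt s$ integers. Restricting to $a_3=0$ for concreteness, the conditions become $(s+a_1^2)(s+a_2^2)=t$ together with $s+a_1^2+a_2^2$ a perfect square; eliminating, a value of $c$ in the window contributes a tuple exactly when $(c^2+s)^2-4t$ is a perfect square $\delta^2$ and $\tfrac12\big((c^2-s)\pm\delta\big)$ are both perfect squares --- equivalently when $t$ admits a factorization $t=ef$ with $e-s$, $f-s$ and $e+f-s$ all perfect squares. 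The plan is then to engineer $(s,t)$ so that many values of $c$ (equivalently many such factorizations of $t$) satisfy these simultaneous square conditions, by exhibiting an explicit parametric family in which the three square conditions hold identically, and to let $s\to\infty$ so that the number of admissible $c$ grows without bound; extracting one such $(s,t)$ for each $N$ yields the desired sequence $(s_i,t_i)$ with $m_{s_i,t_i}\to\infty$.

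I expect the main obstacle to be exactly this last step: simultaneously enforcing the three perfect-square conditions for a growing number of divisor pairs of a common $t$ while respecting the stability inequality. Everything before it is formal, but this is a genuine Diophantine construction (morally the analogue, in this setting, of producing arbitrarily many Pythagorean triangles sharing an invariant), and the delicate interplay forced by $c>5a_1$ --- which confines $c$ to a window of width $\sim\sqrt s$ and the $a_i$ to size $\ll\sqrt s$ --- is what makes it nontrivial.
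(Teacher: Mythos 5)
Your first two paragraphs are sound and match the paper's ingredients (the Chern class computation is Lemma \ref{lem5.5}, and the distinctness of the loci $N(c,a_1,a_2,a_3)$ for distinct tuples follows, as you say, because $H^1_*(\dE)\cong M\otimes S(c)$ is a complete intersection module whose generator degrees recover $c\pm a_i$). But the third step, which you yourself identify as "where the real work lies," is a genuine gap: you never produce the pair $(s,t)$ with many admissible tuples, you only describe a Diophantine construction you would need and flag it as the main obstacle. As written, the proof is incomplete precisely at the one point where the statement has content.

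The gap is also partly self-inflicted, because you let $c$ vary. If instead you fix $c$ and vary only $(a_1,a_2,a_3)$, then $s=c^2-\sigma_1$ and $t=c^4-c^2\sigma_1+\sigma_2$ depend only on $\sigma_1=a_1^2+a_2^2+a_3^2$ and $\sigma_2=a_1^2a_2^2+a_1^2a_3^2+a_2^2a_3^2$, so the problem reduces to: find $(\lambda,\zeta)$ represented by at least $N$ triples $(u,v,w)$ with $u^2+v^2+w^2=\lambda$ and $u^2v^2+u^2w^2+v^2w^2=\zeta$. This is the paper's Lemma \ref{lem5.8}, proved by rewriting $\zeta$ in terms of $\sum u^2$ and $\sum u^4$ and invoking the identity $(ax+(a+2b)y)^k+(bx-(2a+b)y)^k+((a+b)x-(a-b)y)^k=(a^k+b^k+(a+b)^k)(x^2+3y^2)^{k/2}$ for $k=2,4$, together with the fact that $x^2+3y^2=M$ has arbitrarily many integral solutions for suitable $M$. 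Since all such $u,v,w$ are bounded by $\sqrt{\lambda}$, one then simply takes $c>5\sqrt{\lambda}$, and the stability constraint $c>5a_1$ holds for every tuple simultaneously; the "short window" for $c$ and the simultaneous perfect-square conditions on factorizations of $t$ that you anticipate as the hard part never arise. To repair your argument, either import this fixed-$c$ reduction and the equal-sums-of-like-powers identity, or actually carry out the parametric family you allude to --- but the latter is substantially harder than what the statement requires.
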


\textbf{Organization of the paper.}
In section 2, we review some preliminaries that  will be used in the proofs. In section 3, we prove some properties of special generalized null correlation bundles. In section 4, we prove that the dimension of $N(c,a_1,a_2,a_{3})$ is equal to the dimension of the Zariski tangent space of $\mathcal{M}_{\mathbb{P}^{5}}(4;t_1,t_2,t_3,t_{4})$ at $p$ for $p\in N(c,a_1,a_2,a_{3})$. In section 5, we show that the number of irreducible components of moduli spaces of rank 4 stable vector bundles with some fixed Chern class on $\mathbb{P}^5$ can be arbitrarily high. 

\textbf{Notations and Conventions.}
Throughout this paper, we work over the gound field $\mathbb{C}$. We do not distinguish between vector bundles and locally free sheaves. For any sheaf $\dE$ on a projective variety $X$, we will let $H^i_*(X,\dE)=\oplus_{v\in\mathbb{Z}}H^i(X,\dE(v))$ for any $i\in\mathbb{Z}_{\geq0}$. We will write $H^i(X,\dE)$ as $H^i(\dE)$ and $h^i(X,\dE)$ as $h^i(\dE)$ when no confusion can arise. We will denote by $\mathcal{M}_{\mathbb{P}^N}(r;c_1,\cdots,c_r)$ the moduli space of stable rank $r$ vector bundles on $\mathbb{P}^N$ with fixed Chern classes $c_i$, for $i=1,\cdots,r$. For simplicity, we will write $\mathcal{O}_{\mathbb{P}^N}(a)$ as $\mathcal{O}(a)$ for any projective space $\mathbb{P}^N$.

\section{Preliminaries}
\subsection{}
For basic facts and notations about vector bundles we refer to \cite{OSS}. We use the definition of stability of Mumford-Takemoto. 
\begin{definition}\label{def1.9}
	(i) A vector bundle $\mathcal{F}$ is called \textit{symplectic} if there exists an isomorphism $f:\dF\rightarrow \dF^*$ such that $f^T=-f$. This is equivalent to the existence of a nondegenerate form $\Gamma\in H^0(\wedge^2 \dF)$. We will write the symplectic bundle $\dF$ as $(\dF,f)$ if we want to emphasize its symplectic structure. \\
	(ii) If $\varphi:(\dF_1,f_1)\rightarrow(\dF_2,f_2)$ is an isomorphism of vector bundles and $\varphi^Tf_2\varphi=f_1$, then $\varphi$ is called a \textit{symplectic isomorphism}. For a symplectic vector bundle $(\dF,f)$, we will denote by $\Symp(\dF,f)$ the group of symplectic automorphisms of $(\dF,f)$. 
\end{definition}

\begin{example}\label{eg1.10} Given $m$ nonnegative integers $a_1,a_2,\cdots,a_m$,
	$$
	\dH:=\mathcal{O}(a_1)\oplus\mathcal{O}(a_2)\oplus\cdots\oplus\mathcal{O}(a_{m})\oplus\mathcal{O}(-a_{1})\oplus\mathcal{O}(-a_2)\cdots\oplus\mathcal{O}(-a_m)
	$$
	is a vector bundle on a projective manifold $\mathbb{P}^N$. Then 
	$$
	\dH^*=\mathcal{O}(-a_1)\oplus\mathcal{O}(-a_2)\oplus\cdots\oplus\mathcal{O}(-a_{m})\oplus\mathcal{O}(a_1)\oplus\mathcal{O}(a_2)\oplus\cdots\oplus\mathcal{O}(a_{m})
	$$
	Set $J_{2m}:=\begin{pmatrix}
	0 & I_m\\
	-I_m & 0
	\end{pmatrix}$, where $I_m$ is the $m\times m$ identity matrix. Observe that $J_{2m}\in \Hom(\dH,\dH^*)$ and $J_{2m}^T=-J_{2m}$. Then $(\dH,J_{2m})$ is a symplectic vector bundle. 
\end{example}
\begin{lemma} \label{lem2.11}
	Let $\dH=(\oplus^{m}_{i=1}\mathcal{O}(a_i))\oplus(\oplus^{m}_{i=1}\mathcal{O}(-a_i))$ be a vector bundle on $\mathbb{P}^N$, where $a_1\geq a_2\geq\cdots\geq a_{m}$ are all nonnegative integers. Then $\dim\Symp(\dH,J_{2m})=h^0(\dH\otimes\dH)-h^0(\wedge^2\dH)$.
\end{lemma}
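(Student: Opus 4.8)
The plan is to realize $\Symp(\dH,J_{2m})$ as a smooth algebraic group and to compute its dimension as the dimension of its tangent space at the identity. Since we work over $\mathbb{C}$, the group $\Aut(\dH)$ is the open subset of the finite-dimensional vector space $H^0(\dH^*\otimes\dH)$ consisting of invertible global endomorphisms (it is the nonvanishing locus of the determinant, which lies in $H^0(\det\dH^*\otimes\det\dH)=H^0(\mathcal{O})=\mathbb{C}$ because $\det\dH\cong\mathcal{O}$), so $\dim\Aut(\dH)=h^0(\dH^*\otimes\dH)$. Writing $f:=J_{2m}\colon\dH\to\dH^*$, the subset $\Symp(\dH,f)=\{\varphi\in\Aut(\dH):\varphi^Tf\varphi=f\}$ is a closed algebraic subgroup; as every algebraic group of finite type in characteristic $0$ is smooth, its dimension equals $\dim T_{\id}\Symp(\dH,f)$.

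First I would identify this tangent space. Differentiating the defining relation $\varphi^Tf\varphi=f$ along a curve $\varphi_t=\id+tX+O(t^2)$ yields the linear condition $X^Tf+fX=0$. Hence $T_{\id}\Symp(\dH,f)=\ker\Psi$, where $\Psi\colon H^0(\dH^*\otimes\dH)\to H^0(\wedge^2\dH^*)$ is the linear map $X\mapsto X^Tf+fX$; its image really does land in $H^0(\wedge^2\dH^*)$, since $f^T=-f$ forces $(X^Tf+fX)^T=-(X^Tf+fX)$.

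The key computation is then to show that $\Psi$ is surjective. Because $f$ is an isomorphism, the assignment $X\mapsto g:=fX$ identifies $H^0(\dH^*\otimes\dH)$ with $H^0(\dH^*\otimes\dH^*)=H^0(\textnormal{Sym}^2\dH^*)\oplus H^0(\wedge^2\dH^*)$, and under this identification $f^T=-f$ gives $X^Tf=-g^T$, so that $\Psi$ becomes $g\mapsto g-g^T$, i.e. twice the projection onto the antisymmetric summand $H^0(\wedge^2\dH^*)$. This is manifestly surjective, so by rank--nullity $\dim\ker\Psi=h^0(\dH^*\otimes\dH)-h^0(\wedge^2\dH^*)$.

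It remains to rewrite this via the self-duality of $\dH$. Since $\dH=(\oplus_i\mathcal{O}(a_i))\oplus(\oplus_i\mathcal{O}(-a_i))$ satisfies $\dH^*\cong\dH$, we have $h^0(\dH^*\otimes\dH)=h^0(\dH\otimes\dH)$ and $h^0(\wedge^2\dH^*)=h^0(\wedge^2\dH)$, which converts the previous equality into $\dim\Symp(\dH,J_{2m})=h^0(\dH\otimes\dH)-h^0(\wedge^2\dH)$, as claimed. The computation is essentially routine Lie theory (equivalently, $\Psi$ is the differential at the identity of the orbit map $\varphi\mapsto(\varphi^{-1})^Tf\varphi^{-1}$ acting on antisymmetric forms, and the formula is the orbit--stabilizer count); the only points requiring genuine care are the transpose bookkeeping in the surjectivity step and the invocation of characteristic-$0$ smoothness that licenses passing from the group to its tangent space.
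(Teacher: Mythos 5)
Your proof is correct and follows essentially the same route as the paper's: both identify $\Aut(\dH)$ as an open subset of $H^0(\dH\otimes\dH)$ (using that the determinant of any endomorphism of $\dH$ is a constant), and both obtain the codimension of the symplectic condition by showing that its linearization $X\mapsto X^TJ_{2m}+J_{2m}X$ surjects onto the space of antisymmetric forms, of dimension $h^0(\wedge^2\dH)$. The only difference is packaging: the paper exhibits $\varphi\mapsto\varphi^TJ_{2m}\varphi-J_{2m}$ as a submersion at every point of $\Aut(\dH)$ and takes the fiber over $0$, whereas you work at the identity and invoke characteristic-zero smoothness of algebraic groups; both are sound, and your determinant argument via $\det\dH\cong\mathcal{O}$ is in fact cleaner than the paper's degree-growth argument.
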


\begin{proof} 
	Since any element in $\Hom(\dH,\dH)$ can be written as a $2m\times2m$ matrix whose entries are homogeneous polynomials, then $\Hom(\dH,\dH)$ can be considered as a complex manifold of dimension $h^0(\dH\otimes\dH)$. 
	\begin{claim}\label{claim1}
		For any $\varphi\in \Hom(\dH,\dH)$, $\det(\varphi)\in\mathbb{C}$.
	\end{claim}
	Suppose $\det(\varphi)$ is a non-constant polynomial. Then $\det(\varphi^n)=(\det(\varphi))^n$ is a non-constant polynomial of degree $n\deg(\det(\varphi))$. However, the degree of the determinant of any element in $\Hom(\dH,\dH)$ is at most $2ma_1$. When $n$ tends to infinity, $\deg(\det(\varphi^n))$ also tends to infinity, which leads to a contradiction. Thus, the claim follows.
	
	Let $\Aut(\dH)$ be the the group of the automorphisms of $\dH$. Then $\Aut(\dH)\subseteq\Hom(\dH,\dH)$. Moreover, we have
	\begin{claim}\label{claim2}
		$\Aut(\dH)=\{\det(\varphi)\in\mathbb{C}^*|\varphi\in\Hom(\dH,\dH)\}$.
	\end{claim}  	
	For any $\varphi\in \Aut(\dH)$, then there exist $\psi\in\Aut(\dH)$ such that $\varphi\psi=\id$. Thus, $\det(\varphi\psi)=\det(\varphi)\det(\psi)=1$. Since both $\det(\varphi)$ and $\det(\psi)$ are polynomials, then $\det(\varphi)\in\mathbb{C}^*$. On the other hand, for $\varphi\in \Hom(\dH,\dH)$ satisfying that $\det(\varphi)\in\mathbb{C}^*$, let $\psi\in\Hom(\dH,\dH)$ be $\textnormal{adj}(\varphi)/\det(\varphi)$, where $\textnormal{adj}(\varphi)$ is the adjoint matrix of $\varphi$. Then $\varphi\psi=\psi\varphi=\id$. So the claim follows.
	
	By Claim \ref{claim1} and \ref{claim2}, $\Aut(\dH)$ is an open complex submanifold of $\Hom(\dH,\dH)$. Hence $\Aut(\dH)$ is of dimension $h^0(\dH\otimes\dH)$. 
	
	Now consider the following holomorphic map
	$$
	F:\Aut(\dH)\rightarrow \mathcal{S}:=\{\phi^T=-\phi|\phi\in\Hom(\dH,\dH^*)\}
	$$
	$$
	\varphi\mapsto \varphi^TJ_{2m}\varphi-J_{2m}
	$$
	Then $dF_\varphi(\psi)=\varphi^TJ_{2m}\psi+\psi^TJ_{2m}\varphi=\varphi^TJ_{2m}\psi-(\varphi^TJ_{2m}\psi)^T$. For any $\phi_0\in\Hom(\dH,\dH^*)$, we have $\varphi^TJ_{2m}(J_{2m}^{T}(\varphi^{T})^{-1}\phi_0)=\phi_0$. Thus, $F$ is a submersion. Since $\Symp(\dH,J_{2m})=F^{-1}(0)$ is the preimage of the regular value 0, $\Symp(\dH,J_{2m})$ is a complex submanifold. Observe that $\mathcal{S}$ is a complex submanifold of dimension $h^0(\wedge^2\dH)$ in $\Hom(\dH,\dH^*)$. Then $\dim\Symp(\dH,J_{2m})=h^0(\dH\otimes\dH)-h^0(\wedge^2\dH)$.
\end{proof}
\begin{remark}
When $a_1=a_2=\cdots=a_m=0$, the lemma reduces to the dimension formula of $\textnormal{Sp}(2m,\mathbb{C})$: 
$$
\dim\Symp(\dH,J_{2m})=\dim\textnormal{Sp}(2m,\mathbb{C})=m(2m+1).
$$
\end{remark}
\subsection{}
\begin{definition}(\cite{BH78})
	A \textit{monad} on a smooth projective variety $X$ of dimension $n$ is a sequence of vector bundles
	\begin{equation}\label{eqn3}
	\mathcal{A}\xrightarrow{a}\mathcal{B}\xrightarrow{c}\mathcal{C}
	\end{equation}
	with $ca=0$, $a$ injective with $\Image(a)\subset\mathcal{B}$ a subbundle, and $c$ surjective.
	
	The diagram
	$$
	\xymatrix{
		{} & 0\ar[d]                         & 0\ar[d]                         & {} &  {}  \\
		{} & \mathcal{A}\ar[d]^{}\ar@{=}[r]  & \mathcal{A}\ar[d]^{a} & {} &  {}  \\
		0\ar[r] & \mathcal{K}\ar[r]^{}\ar[d] & \mathcal{B}\ar[r]^{c}\ar[d]^{} & \mathcal{C}\ar[r]\ar@{=}[d] & 0\\
		0\ar[r] & \mathcal{F}\ar[r]\ar[d]           & \mathcal{P}\ar[r]^{}\ar[d] & \mathcal{C}\ar[r]       & 0\\
		{} & 0 & 0 &  & 
	}
	$$
	is called the \textit{display of the monad} (\ref{eqn3}). Here $\mathcal{K}:=\ker c$, $\mathcal{P}:=\coker\;a$, and $\dF:=\ker c/\Image\;a$ is called the \textit{cohomology bundle of the monad} (\ref{eqn3}).
\end{definition}
\begin{remark}\label{rmk2.21}
	$\rk\,\dF=\rk\,\mathcal{B}-\rk\,\mathcal{A}-\rk\,\mathcal{C}$, $c(\mathcal{P})=c(\mathcal{B})c(\mathcal{A})^{-1}$ and $c(\dF)=c(\mathcal{B})c(\mathcal{A})^{-1}c(\mathcal{C})^{-1}$.
\end{remark}
Now we apply the monad construction to symplectic bundle $(\dF,f)$.
\begin{definition} (\cite{BH78})
	A monad
	\begin{equation}\label{eqn1}
	\mathcal{A}\xrightarrow{a}\mathcal{B}\xrightarrow{c}\mathcal{A}^*
	\end{equation}	
	will be called \textit{self-dual} with respect to $(\dF,f)$ if $\dF$ is the cohomology bundle of the monad (\ref{eqn1}) and there is an isomorphism $b:\mathcal{B}\rightarrow\mathcal{B}^*$ with $b=-b^T$, $c=a^Tb$ such that $f$ is induced by $b$ via
	$$
	\xymatrix{
		{\mathcal{A}}\ar[r]^{a}\ar[d]^{- \id} & \mathcal{B}\ar[r]^{c}\ar[d]^{b}                         & \mathcal{A}^*\ar[d]^{\id}                         \\
		\mathcal{A}\ar[r]^{c^T} & \mathcal{B}^*\ar[r]^{a^T}  & \mathcal{A}^*  
	}
	$$
\end{definition}

\begin{theorem}\label{thm1.22}(\cite{BH78}) Let $\varphi:\mathcal{F}_1\rightarrow\dF_2$ be a symplectic isomorphism between two symplectic bundles $(\dF_1,f_1)$ and $(\dF_2,f_2)$, i.e., $\varphi^Tf_2\varphi=f_1$. If $\mathcal{A}_i\xrightarrow{a_i}\mathcal{B}_i\xrightarrow{a^T_ib_i}\mathcal{A}^*_i$ are self-dual monads for $(\mathcal{F}_i,f_i)$ satisfying $\Hom(\mathcal{B}_i,\mathcal{A}_i)=0$, for $i=1,2$, then $\varphi$ is induced by a unique isomorphism of monads:
	$$
	\xymatrix{
		{\mathcal{A}}_1\ar[r]^{a_1}\ar[d]^{\alpha} & \mathcal{B}_1\ar[r]^{a^T_1b_1}\ar[d]^{\beta}                         & \mathcal{A}^*_1\ar[d]^{(\alpha^T)^{-1}}                         \\
		\mathcal{A}_2\ar[r]^{a_2} & \mathcal{B}_2\ar[r]^{a^T_2b_2}  & \mathcal{A}^*_2  
	}
	$$
	Here $\beta$ is symplectic, i.e., $\beta^Tb_2\beta=b_1$.  	
\end{theorem}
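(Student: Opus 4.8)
The plan is to deduce the statement from a single lifting principle: under the stated hypotheses, the natural map sending a morphism of monads to the induced morphism of cohomology bundles is a bijection. Concretely, I would first show that the symplectic isomorphism $\varphi\colon\mathcal{F}_1\to\mathcal{F}_2$ is induced by a \emph{unique} morphism of monads $(\alpha,\beta,\gamma)$, where a priori $\gamma\colon\mathcal{A}_1^*\to\mathcal{A}_2^*$ is an arbitrary map filling the right-hand square; then upgrade this lift to an isomorphism of monads; and finally use the self-dual data $b_1,b_2$ together with the uniqueness of the lift to force $\gamma=(\alpha^T)^{-1}$ and the symplectic identity $\beta^Tb_2\beta=b_1$. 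Throughout, the relevant bundles $\mathcal{A}_i,\mathcal{B}_i$ are sums of line bundles on $\mathbb{P}^5$, so all intermediate cohomology $H^1,H^2$ of line bundles vanishes; this is what makes the obstruction groups below disappear.

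For existence, I would chase the two displays. Writing $\mathcal{K}_i=\ker(a_i^Tb_i)$ and using the display rows $0\to\mathcal{A}_i\to\mathcal{K}_i\to\mathcal{F}_i\to0$ and $0\to\mathcal{K}_i\to\mathcal{B}_i\xrightarrow{a_i^Tb_i}\mathcal{A}_i^*\to0$, I first lift $\varphi$ to $\kappa\colon\mathcal{K}_1\to\mathcal{K}_2$ covering $\varphi$; the obstruction lies in $\mathrm{Ext}^1(\mathcal{K}_1,\mathcal{A}_2)$, which vanishes by the cohomology vanishing above. Restricting $\kappa$ to the subsheaves $\mathcal{A}_i$ gives $\alpha$, and extending $\kappa$ across $0\to\mathcal{K}_1\to\mathcal{B}_1\to\mathcal{A}_1^*\to0$ (obstruction in $\mathrm{Ext}^1(\mathcal{A}_1^*,\mathcal{B}_2)=H^1(\mathcal{A}_1\otimes\mathcal{B}_2)=0$) produces $\beta$, whence $\gamma$ on cokernels. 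For uniqueness I would show that the difference $(\delta_A,\delta_B,\delta_C)$ of two lifts, which induces $0$ on cohomology, is null-homotopic: the zero condition forces $\delta_B(\mathcal{K}_1)\subseteq a_2(\mathcal{A}_2)$, and using $\mathrm{Ext}^1(\mathcal{A}_1^*,\mathcal{A}_2)=0$ one extends the resulting map $\mathcal{K}_1\to\mathcal{A}_2$ to $h_0\colon\mathcal{B}_1\to\mathcal{A}_2$; then $\delta_B-a_2h_0$ kills $\mathcal{K}_1$ and factors as $h_1(a_1^Tb_1)$, giving $\delta_B=a_2h_0+h_1(a_1^Tb_1)$, while $\delta_A=h_0a_1$ and $\delta_C=(a_2^Tb_2)h_1$ follow from the monad relations $c_ia_i=0$. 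The hypothesis $\Hom(\mathcal{B}_i,\mathcal{A}_i)=0$ — together with its cross-index companions $\Hom(\mathcal{B}_1,\mathcal{A}_2)=0$ and $\Hom(\mathcal{A}_1^*,\mathcal{B}_2)\cong\Hom(\mathcal{B}_2,\mathcal{A}_1)=0$, which hold here because $c>a_1\geq\cdots$ — then kills $h_0$ and $h_1$, so the difference vanishes.

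Since $\varphi$ is an isomorphism, lifting $\varphi^{-1}$ as well and invoking uniqueness shows the two lifts are mutually inverse, so $(\alpha,\beta,\gamma)$ is an isomorphism of monads. To identify $\gamma$ and prove $\beta$ is symplectic I would dualize. The self-dual data give monad isomorphisms $b_i\colon M_i\to M_i^\vee$ with components $(-\id,b_i,\id)$, inducing $f_i$ on cohomology, and dualizing $(\alpha,\beta,\gamma)$ gives a monad morphism $M_2^\vee\to M_1^\vee$ with components $(\gamma^T,\beta^T,\alpha^T)$ inducing $\varphi^T$ on cohomology. Forming the composite $b_2^{-1}\circ((\alpha,\beta,\gamma)^\vee)^{-1}\circ b_1\colon M_1\to M_2$ and computing its effect on cohomology yields $f_2^{-1}(\varphi^{-1})^Tf_1$, which equals $\varphi$ precisely because $\varphi^Tf_2\varphi=f_1$. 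By uniqueness this composite coincides with $(\alpha,\beta,\gamma)$; reading off the $\mathcal{A}$- and $\mathcal{B}$-components gives $\alpha=(\gamma^T)^{-1}$, i.e. $\gamma=(\alpha^T)^{-1}$, and $\beta=b_2^{-1}(\beta^T)^{-1}b_1$, i.e. $\beta^Tb_2\beta=b_1$, as required.

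I expect the main obstacle to be the duality bookkeeping of the last step: one must track the signs in the self-dual components $(-\id,b_i,\id)$ and verify that the composite induces exactly $\varphi$ (not a sign-twisted variant) so that uniqueness applies cleanly and delivers $\beta^Tb_2\beta=b_1$ rather than a spurious sign. The lifting lemma itself, while requiring a careful chase through both displays, is essentially forced once the intermediate cohomology of line bundles on $\mathbb{P}^5$ is seen to vanish, and everything after it is a formal consequence of uniqueness together with the self-dual structure.
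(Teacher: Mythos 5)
This statement is quoted in the paper from \cite{BH78} and is given no proof there, so there is nothing internal to compare against; your proposal is in effect a reconstruction of the Barth--Hulek argument, and it is essentially correct and essentially the standard one (the display-chase lifting lemma, the null-homotopy uniqueness argument, and then the formal dualization step that identifies $\gamma=(\alpha^T)^{-1}$ and forces $\beta^Tb_2\beta=b_1$ via uniqueness). Your sign bookkeeping in the last step also checks out: the $\mathcal{A}$-component of the composite picks up $(-\id)\circ(\gamma^T)^{-1}\circ(-\id)=(\gamma^T)^{-1}$, so no spurious sign survives, and $f_2^{-1}(\varphi^{-1})^Tf_1=\varphi$ is exactly the symplectic hypothesis.

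Two small points worth making explicit. First, the obstruction group $\mathrm{Ext}^1(\mathcal{K}_1,\mathcal{A}_2)$ is not literally an $H^1$ of a sum of line bundles, since $\mathcal{K}_1$ is not split; you should run it through the sequence $0\to\mathcal{C}_1^*\to\mathcal{B}_1^*\to\mathcal{K}_1^*\to0$ so that it is squeezed between $H^1(\mathcal{B}_1^*\otimes\mathcal{A}_2)$ and $H^2(\mathcal{C}_1^*\otimes\mathcal{A}_2)$, both zero for line bundles on $\mathbb{P}^5$. Second, you are right that the hypothesis $\Hom(\mathcal{B}_i,\mathcal{A}_i)=0$ as literally stated does not by itself kill the homotopies $h_0\in\Hom(\mathcal{B}_1,\mathcal{A}_2)$ and $h_1\in\Hom(\mathcal{A}_1^*,\mathcal{B}_2)$; your observation that the needed cross-index vanishings hold because the two monads have the same type ($\mathcal{A}_1=\mathcal{A}_2=\mathcal{O}(-c)$, $\mathcal{B}_1=\mathcal{B}_2=\dH\cong\dH^*$, $c>a_1\geq\cdots$) is exactly the right fix, and is how the statement is actually used in the paper (in the proof of Proposition \ref{thm4.4}).
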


\section{Properties of special generalized null correlation bundles}
\begin{definition}\label{def2.1}
For $n\geq 1$, a rank $2n$ bundle $\mathcal{E}$ on $\mathbb{P}^{2n+1}$ is called \textit{generalized null correlation bundle} if it is the cohomology bundle of the following monad,
\begin{equation}\label{eqn13}
	\mathcal{O}(-c)\xrightarrow{\beta}\mathcal{H}\xrightarrow{\alpha}\mathcal{O}(c)
\end{equation}
where $\mathcal{H}=(\mathcal{O}(a_1)\oplus\cdots\mathcal{O}(a_{n+1}))\oplus(\mathcal{O}(-a_{1})\oplus\cdots\oplus\mathcal{O}(-a_{n+1}))$, $c>a_1\geq a_2\geq\cdots\geq a_{n+1}$ are all nonnegative integers, $\alpha$ is a surjection and $\beta\in \textnormal{Hom}(\mathcal{O}(-c),\mathcal{H})$ is a general element. Moreover, the generalized null correlation bundle $\dE$ is said to be \textit{special} if in (\ref{eqn13}),
$$
\beta=(g_1,\cdots,g_{n+1},-f_{1},\cdots,-f_{n+1})^T
$$  
and
$$
\alpha=(f_1,\cdots,f_{n+1},g_{1},\cdots,g_{n+1})
$$
where $f_i\in H^0(\mathcal{O}(c-a_i))$ and $g_{i}\in H^0(\mathcal{O}(c+a_i))$ for $i=1,\cdots,n+1$.	
\end{definition}
We have the display of the monad (\ref{eqn13})
$$
\xymatrix{
	{} & 0\ar[d]                         & 0\ar[d]                         & {} &  {}  \\
	{} & \mathcal{O}(-c)\ar[d]^{}\ar@{=}[r]  & \mathcal{O}(-c)\ar[d]^{\beta} & {} &  {}  \\
	0\ar[r] & \mathcal{F}\ar[r]^{}\ar[d] & \mathcal{H}\ar[r]^{\alpha}\ar[d]^{} & \mathcal{O}(c)\ar[r]\ar@{=}[d] & 0\\
	0\ar[r] & \mathcal{E}\ar[r]\ar[d]           & \mathcal{G}\ar[r]^{}\ar[d] & \mathcal{O}(c)\ar[r]       & 0\\
	{} & 0 & 0 &  & 
}
$$
In the display, $\mathcal{F}=\ker(\alpha)$ and $\dG=\textnormal{coker}(\beta)$ are both rank $2n+1$ bundles.\\
For future reference, we list four short exact sequences from the display.
\[
0\rightarrow\mathcal{O}(-c)\rightarrow\dF\rightarrow\dE\rightarrow 0 \tag{3.a}\label{seqA}
\]
\[
0\rightarrow\mathcal{O}(-c)\rightarrow\dH\rightarrow\dG\rightarrow 0 
\tag{3.b}\label{seqB}
\]
\[
0\rightarrow\dF\rightarrow\dH\rightarrow\mathcal{O}(c)\rightarrow 0
\tag{3.c}\label{seqC}
\]
\[
0\rightarrow\dE\rightarrow\dG\rightarrow\mathcal{O}(c)\rightarrow 0
\tag{3.d}\label{seqD}
\]
Let $S=\mathbb{C}[x_0,x_1,\cdots,x_{2n+1}]$. Set
$$
M=S/<f_1,\cdots,f_{n+1},g_1,\cdots,g_{n+1}>
$$
Observe that $M$ is a 0-dimensional graded complete intersection ring. Denote by $M_j$ the degree $j$ component of $M$. Then $\dim M_0=1$ and $\dim M_j=0$ for $j<0$. 

In the remainder of this section, we assume that $\dH$ is as defined in Definition \ref{def2.1} and $\mathcal{E}$ is a special generalized null correlation bundle on $\mathbb{P}^{2n+1}$, where $n\geq 2$. $\mathcal{F}$ and $\dG$ are as defined in the display of the monad (\ref{eqn13}).
\begin{proposition}\label{prop2.5}
	(i) $H^1_*(\dE)\cong H^1_*(\dF)\cong M\otimes S(c)$.\\
	(ii) $H^i_*(\mathcal{\dE})=0$ for $2\leq i\leq 2n-1$.	
\end{proposition}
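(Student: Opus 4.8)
The plan is to read off both statements from the display, reducing everything to the cohomology of sums of line bundles on $\mathbb{P}^{2n+1}$. The key vanishing input is that for any line bundle $\mathcal{O}(v)$ on $\mathbb{P}^{2n+1}$ one has $H^i(\mathcal{O}(v))=0$ for $1\le i\le 2n$; consequently $H^i_*(\dH)=0$ and $H^i_*(\mathcal{O}(\pm c))=0$ in the same range. Since $n\ge 2$, this range comfortably contains all the degrees that will occur.

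First I would compute $H^\bullet_*(\dF)$ from the sequence (\ref{seqC}), $0\to\dF\to\dH\to\mathcal{O}(c)\to 0$. Taking the associated long exact sequence in $H^\bullet_*$, the segment $H^{i-1}_*(\mathcal{O}(c))\to H^i_*(\dF)\to H^i_*(\dH)$ has both outer terms zero for $2\le i\le 2n$, giving $H^i_*(\dF)=0$ in that range. For $i=1$ the sequence degenerates to
$$
H^0_*(\dH)\xrightarrow{\alpha} H^0_*(\mathcal{O}(c))\to H^1_*(\dF)\to 0,
$$
so $H^1_*(\dF)\cong\coker\big(H^0_*(\alpha)\big)$.

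The substantive step is to identify this cokernel with $M\otimes_S S(c)=M(c)$, which is what is meant by $M\otimes S(c)$. Here $H^0_*(\dH)=\big(\oplus_i S(a_i)\big)\oplus\big(\oplus_i S(-a_i)\big)$ and $H^0_*(\mathcal{O}(c))=S(c)$, and on sections $\alpha$ sends $(s_i,t_i)\mapsto\sum_i f_i s_i+\sum_i g_i t_i$, i.e. it is multiplication by the $f_i,g_i$. The care here is entirely with the grading: in internal degree $v$ the image equals $\sum_i f_i S_{v+a_i}+\sum_i g_i S_{v-a_i}$, which is precisely the degree $c+v$ graded piece of the ideal $I=\langle f_1,\dots,f_{n+1},g_1,\dots,g_{n+1}\rangle$ after the shift by $c$. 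Hence the image of $H^0_*(\alpha)$ is exactly $I$ embedded in $S(c)$, and the cokernel is $(S/I)(c)=M(c)$. This gives the second isomorphism in (i), and together with the previous paragraph it records that $H^i_*(\dF)=0$ for $2\le i\le 2n$.

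Finally I would transfer from $\dF$ to $\dE$ via (\ref{seqA}), $0\to\mathcal{O}(-c)\to\dF\to\dE\to 0$. Since $H^1_*(\mathcal{O}(-c))=H^2_*(\mathcal{O}(-c))=0$, the long exact sequence yields $H^1_*(\dE)\cong H^1_*(\dF)$, the first isomorphism in (i). For (ii), in the range $2\le i\le 2n-1$ the relevant segment $H^i_*(\dF)\to H^i_*(\dE)\to H^{i+1}_*(\mathcal{O}(-c))$ has $H^i_*(\dF)=0$ by the above and $H^{i+1}_*(\mathcal{O}(-c))=0$ because $3\le i+1\le 2n$; hence $H^i_*(\dE)=0$. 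The only real obstacle is the bookkeeping: making sure the degree shifts in the cokernel computation line up so that the image is exactly the complete-intersection ideal $I$, and checking that every line-bundle cohomology group invoked falls inside the vanishing range $1\le i\le 2n$.
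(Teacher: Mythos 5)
Your proof is correct and follows essentially the same route as the paper: both arguments run the long exact sequences of (\ref{seqA}) and (\ref{seqC}) in every twist and use the vanishing $H^i(\mathbb{P}^{2n+1},\mathcal{O}(v))=0$ for $1\le i\le 2n$. The only difference is cosmetic — you spell out the identification of $\coker\big(H^0_*(\alpha)\big)$ with $(S/I)(c)=M(c)$ in more detail than the paper does, which is a welcome bit of bookkeeping but not a different argument.
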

\begin{proof}
	(i) For any $t\in\mathbb{Z}$, tensoring \eqref{seqA} by $\mathcal{O}(t)$, we have the following exact sequence
    \begin{equation}\label{3.3.1}
	0\rightarrow\mathcal{O}(t-c)\rightarrow\mathcal{F}(t)\rightarrow\mathcal{E}(t)\rightarrow0.
	\end{equation}
	Passing to cohomology, the exact sequence (\ref{3.3.1}) yields $H^1_*(\mathcal{E})\cong H^1_*(\mathcal{F})$ since $H^1(\mathcal{O}(t-c))\cong H^2(\mathcal{O}(t-c))=0$ for any $t\in\mathbb{Z}$. On the other hand, consider the following exact sequence,
	\begin{equation}\label{3.3.2}
	0\rightarrow\mathcal{F}(t)\rightarrow\mathcal{H}(t)\rightarrow\mathcal{O}(c+t)\rightarrow0
	\end{equation}
	Passing to cohomology, the exact sequence (\ref{3.3.2}) yields $H^1_*(\mathcal{F})\cong M\otimes S(c)$.\\
	(ii) For any $t\in\mathbb{Z}$, tensoring \eqref{seqC} by $\mathcal{O}(t)$, we have the following exact sequence
	\begin{equation}\label{3.3.3}
	0\rightarrow\dF(t)\rightarrow\dH(t)\rightarrow\mathcal{O}(c+t)\rightarrow0
	\end{equation}
	Passing to cohomology, the exact sequence (\ref{3.3.3}) yields $H^i(\dF(t))\cong H^i(\dH(t))=0$ for $2\leq i\leq2n$ since
	$H^i(\mathcal{O}(c+t))=0$ for $1\leq i\leq2n$. On the other hand, consider the following exact sequence
	\begin{equation}\label{3.3.4}
	0\rightarrow\mathcal{O}(t-c)\rightarrow\dF(t)\rightarrow\dE(t)\rightarrow0
	\end{equation}
	Passing to cohomology, the exact sequence (\ref{3.3.4}) yields $H^i(\dE(t))\cong H^i(\dF(t))=0$ for $1\leq i\leq2n-1$
	since $H^i(\mathcal{O}(t-c))=0$ for $1\leq i\leq2n$. Thus, $H^i_*(\mathcal{\dE})=0$ for $2\leq i\leq 2n-1$.
\end{proof}
Similar to Proposition 1.1 in \cite{Ein88}, we have
\begin{proposition}\label{prop2.6}
	Let $F:=H^0_*(\mathcal{F})$ and $H:=(\oplus^{n+1}_{j=1}S(a_j))\oplus(\oplus^{n+1}_{j=1}S(-a_j))$. Then $H^1_*(\mathcal{E})$ and $F$ have the following minimal graded free resolutions over $S$.
	\begin{equation}\label{LES1}
	0\rightarrow \wedge^{2n+2}H\otimes S(-(2n+1)c)\rightarrow \wedge^{2n+1}H\otimes S(-2nc)\rightarrow\cdots
	\end{equation}
	\begin{equation*}
	\rightarrow\wedge^{3}H\otimes S(-2c)\rightarrow\wedge^{2}H\otimes S(-c)\rightarrow H\xrightarrow{\alpha} S(c)\rightarrow H^1_*(\mathcal{E})\rightarrow 0
	\end{equation*}
	\begin{equation}\label{LES2}
	0\rightarrow \wedge^{2n+2}H\otimes S(-(2n+1)c)\rightarrow \wedge^{2n+1}H\otimes S(-2nc)\rightarrow\cdots
	\end{equation}
	\begin{equation*}
	\rightarrow\wedge^{3}H\otimes S(-2c)\rightarrow\wedge^{2}H\otimes S(-c)\rightarrow F \rightarrow 0
	\end{equation*}
\end{proposition}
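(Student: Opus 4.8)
The plan is to recognize both resolutions as a twist, respectively a truncation, of the Koszul complex attached to the regular sequence $f_1,\dots,f_{n+1},g_1,\dots,g_{n+1}$. Since $M=S/\langle f_1,\dots,f_{n+1},g_1,\dots,g_{n+1}\rangle$ is a $0$-dimensional graded complete intersection in the Cohen--Macaulay ring $S$ of dimension $2n+2$, these $2n+2$ homogeneous elements form a regular sequence, so the Koszul complex on them is a minimal graded free resolution of $M$. Everything then comes down to keeping track of the twists and to the identification $F=\ker(\alpha:H\to S(c))$.

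First I would fix the bookkeeping. With $H$ as in the statement, $\alpha=(f_1,\dots,f_{n+1},g_1,\dots,g_{n+1})$ is a degree-zero homomorphism $\alpha:H\to S(c)$, because $\deg f_i=c-a_i$ matches $S(a_i)\to S(c)$ and $\deg g_i=c+a_i$ matches $S(-a_i)\to S(c)$. After tensoring with $S(-c)$ the components of $\alpha$ become the bare regular sequence, realized as a map $\alpha':H\otimes S(-c)\to S$, whose Koszul differentials are contraction against $\alpha'$. Using $\wedge^p\!\big(H\otimes S(-c)\big)=\wedge^pH\otimes S(-pc)$ (valid since $S(-c)$ has rank one), the Koszul resolution of $M$ is
\[
0\to \wedge^{2n+2}H\otimes S(-(2n+2)c)\to\cdots\to \wedge^2H\otimes S(-2c)\to H\otimes S(-c)\xrightarrow{\alpha'}S\to M\to0 .
\]
Tensoring this entire complex with $S(c)$ is exact (it merely shifts each term up by $c$) and turns the cokernel into $M\otimes S(c)$, which is $H^1_*(\dE)$ by Proposition \ref{prop2.5}(i). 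This produces exactly the complex (\ref{LES1}), with terminal maps $H\xrightarrow{\alpha}S(c)\to H^1_*(\dE)\to0$.

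For (\ref{LES2}) I would first identify $F$ with $\ker\alpha$. Applying $H^0_*$ to the display sequence (\ref{seqC}) and using $H^1_*(\dH)=0$ (a direct sum of line bundles on $\mathbb{P}^{2n+1}$ with $2n+1\ge 3$ has no intermediate cohomology) gives the exact sequence $0\to F\to H\xrightarrow{\alpha}S(c)\to M\otimes S(c)\to0$, so indeed $F=\ker(\alpha:H\to S(c))$. By exactness of the twisted Koszul complex above, this kernel is the image of $\wedge^2H\otimes S(-c)\to H$; hence truncating that complex at this spot yields
\[
0\to \wedge^{2n+2}H\otimes S(-(2n+1)c)\to\cdots\to \wedge^3H\otimes S(-2c)\to \wedge^2H\otimes S(-c)\to F\to0 ,
\]
which is (\ref{LES2}).

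Finally, minimality of both resolutions is automatic: every entry of every Koszul differential is one of the $f_i,g_i$ up to sign, and each has positive degree, since $c>a_1\ge a_i\ge0$ forces $\deg f_i=c-a_i\ge1$ and $\deg g_i=c+a_i\ge1$. Thus all differentials have entries in the maximal ideal $\mathfrak{m}=\langle x_0,\dots,x_{2n+1}\rangle$, which is the minimality criterion over $S$. I do not expect a genuine obstacle here: the only points needing care are the twist computation for $\wedge^p\!\big(H\otimes S(-c)\big)$ and the verification that $\coker\alpha$ is $M\otimes S(c)$ rather than something larger, both of which are routine once the degrees of the $f_i,g_i$ are pinned down and the regular-sequence hypothesis is supplied by the complete-intersection structure of $M$.
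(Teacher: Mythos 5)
Your proof is correct and is exactly the paper's argument: the paper's own proof is the one-line observation that both resolutions ``follow from the Koszul complex associated to the map $\alpha:H\rightarrow S(c)$,'' and your write-up simply supplies the details (regular sequence from the complete-intersection structure of $M$, the twist bookkeeping $\wedge^p(H\otimes S(-c))=\wedge^pH\otimes S(-pc)$, the identification $F=\ker\alpha$ via \eqref{seqC}, and minimality from $\deg f_i,\deg g_i\geq 1$). No issues.
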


\begin{proof}
The proposition follows from the Koszul complex associated to the map $\alpha:H\rightarrow S(c)$ immediately.
\end{proof}
\begin{corollary}\label{cor2.7} If $c>2a_1+a_2$, then\\
(i) $h^0(\dF\otimes\dH)=0$.\\
(ii) $h^1(\dE\otimes\dH)=h^0(\dH(c))-h^0(\dH\otimes\dH)$.\\
(iii) $h^0(\dE(c))=h^0(\dF(c))-1=h^0(\wedge^2\dH)-1$.\\
(iv) $h^0(\dE)=h^0(\dF)=0$.\\
\end{corollary}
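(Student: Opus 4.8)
The plan is to reduce all four statements to two inputs coming from the display: the minimal free resolution of $F:=H^0_*(\dF)$ in Proposition~\ref{prop2.6} (together with Proposition~\ref{prop2.5}), and the four short exact sequences (\ref{seqA})--(\ref{seqD}). The single arithmetic ingredient is the hypothesis $c>2a_1+a_2$, which I would use to locate the lowest graded pieces in which $F$ and the relevant Koszul terms are nonzero. So the first and central step is to pin down where $F$ lives.

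In the resolution (\ref{LES2}) the module $F$ is generated by the image of $\wedge^2 H\otimes S(-c)$, where $H=(\oplus_j S(a_j))\oplus(\oplus_j S(-a_j))=\oplus_k S(b_k)$ with $\{b_k\}=\{\pm a_j\}$. Since $\wedge^2 H=\oplus_{k<l}S(b_k+b_l)$, its generators sit in degrees $-(b_k+b_l)$, the most negative being $-(a_1+a_2)$; hence the generators of $\wedge^2H\otimes S(-c)$ sit in degrees $\geq c-a_1-a_2$, so $F_t=0$ for $t<c-a_1-a_2$. Because $c>2a_1+a_2$ forces $c-a_1-a_2>a_1$, I obtain the key vanishing $F_t=h^0(\dF(t))=0$ for every $t\le a_1$. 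Given this, (i) and (iv) are immediate: writing $\dF\otimes\dH=\oplus_k\dF(b_k)$ with all $b_k\le a_1$ gives $h^0(\dF\otimes\dH)=\sum_k F_{b_k}=0$ and $h^0(\dF)=F_0=0$, and then (\ref{seqA}), using $H^0(\mathcal{O}(-c))=H^1(\mathcal{O}(-c))=0$, yields $h^0(\dE)=h^0(\dF)=0$.

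For (iii) I would read off $F_c$ from the tail $\wedge^3 H\otimes S(-2c)\to\wedge^2 H\otimes S(-c)\to F\to 0$ of (\ref{LES2}). In degree $c$ the first term is $(\wedge^3 H)_{-c}$, and since the generators of $\wedge^3 H$ have degree $\geq -(a_1+a_2+a_3)$ while $c>2a_1+a_2\ge a_1+a_2+a_3$, this term vanishes; therefore $F_c\cong(\wedge^2 H\otimes S(-c))_c=(\wedge^2 H)_0=H^0(\wedge^2\dH)$, i.e. $h^0(\dF(c))=h^0(\wedge^2\dH)$. Twisting (\ref{seqA}) by $\mathcal{O}(c)$ and using $h^0(\mathcal{O})=1$, $H^1(\mathcal{O})=0$ then gives $h^0(\dE(c))=h^0(\dF(c))-1=h^0(\wedge^2\dH)-1$. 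For (ii) I would tensor the display by $\dH$. From (\ref{seqB})$\otimes\dH$, the vanishing $H^i(\dH(-c))=0$ for $i=0,1,2$ (all twists negative, and higher line-bundle cohomology on $\mathbb{P}^{2n+1}$ vanishes) yields $H^0(\dG\otimes\dH)\cong H^0(\dH\otimes\dH)$ and $H^1(\dG\otimes\dH)=0$. Then (\ref{seqD})$\otimes\dH$ gives the exact sequence $0\to H^0(\dE\otimes\dH)\to H^0(\dG\otimes\dH)\to H^0(\dH(c))\to H^1(\dE\otimes\dH)\to H^1(\dG\otimes\dH)$. Finally $H^0(\dE\otimes\dH)=0$, since each $H^0(\dE(b_k))$ is a quotient of $F_{b_k}=0$ via (\ref{seqA}) twisted by $b_k\le a_1$ (using $H^1(\mathcal{O}(b_k-c))=0$); reading off dimensions gives $h^1(\dE\otimes\dH)=h^0(\dH(c))-h^0(\dH\otimes\dH)$.

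The main obstacle is the bookkeeping of the first step: correctly extracting the minimal generator degrees of $\wedge^2 H$ and $\wedge^3 H$ from the resolution and checking that $c>2a_1+a_2$ is precisely what delivers its two distinct consequences, namely $c-a_1-a_2>a_1$ (for (i), (iv), (ii)) and $c>a_1+a_2+a_3$ (for (iii)). Once the vanishing $F_t=0$ for $t\le a_1$ is in hand, the remaining work is routine diagram chasing in the display; in particular I would avoid the alternative route through the Hilbert function of $M$ and the identities $h^1(\dE(t))=\dim M_{c+t}$ from Proposition~\ref{prop2.5}, which reproves (ii) but requires grinding out a Koszul computation.
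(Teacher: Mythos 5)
Your proposal is correct and is essentially the paper's own argument spelled out in full: the paper's proof consists of the two display sequences for the first equalities in (iii) and (iv), plus the one-line remark that the remaining equalities ``follow from computing dimensions of some graded pieces'' of the resolutions (\ref{LES1}) and (\ref{LES2}), and your extraction of the vanishing $F_t=0$ for $t\le a_1$ from the generator degrees of $\wedge^2 H\otimes S(-c)$, together with $(\wedge^3 H)_{-c}=0$, is exactly that computation made explicit. The only (minor) divergence is in (ii), where the paper's remark points to reading off $h^1(\dE(b_k))=\dim M_{c+b_k}$ from $H^1_*(\dE)\cong M\otimes S(c)$ and (\ref{LES1}), whereas you instead tensor (\ref{seqB}) and (\ref{seqD}) by $\dH$ and use $H^0(\dE\otimes\dH)=0$; both routes are valid and rest on the same inputs, yours trading a Koszul degree count for a short diagram chase in the display.
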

\begin{proof}
	For (iii), the first equality follows from the exact sequence
$$
0\rightarrow H^0(\mathcal{O})\rightarrow H^0(\mathcal{F}(c))\rightarrow H^0(\mathcal{E}(c))\rightarrow H^1(\mathcal{O})=0.
$$
For (iv), the first equality follows from the exact sequence
$$
0\rightarrow H^0(\mathcal{O}(-c))\rightarrow H^0(\mathcal{F})\rightarrow H^0(\mathcal{E})\rightarrow H^1(\mathcal{O}(-c))=0,
$$
where $H^0(\mathcal{O}(-c))=0$.

The other equalities follow from computing dimensions of some graded pieces of each term in minimal graded free resolutions (\ref{LES1}) and (\ref{LES2}).
\end{proof} 

\begin{proposition}\label{PropR1}
	(i) $\mathcal{F}^*\cong\mathcal{G}$.\\
	(ii) $\mathcal{E}$ is a symplectic bundle.
\end{proposition}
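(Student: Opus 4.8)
The plan is to make everything hinge on two matrix identities relating the maps $\alpha,\beta$ of the monad (\ref{eqn13}) to the standard symplectic form $J:=J_{2(n+1)}\colon\dH\to\dH^*$ of Example \ref{eg1.10}. Writing $f=(f_1,\dots,f_{n+1})^T$ and $g=(g_1,\dots,g_{n+1})^T$, the special shape $\beta=(g^T,-f^T)^T$ and $\alpha=(f^T,g^T)$ gives, by a direct block computation,
\[
J\beta=\begin{pmatrix}0 & I\\ -I & 0\end{pmatrix}\begin{pmatrix}g\\ -f\end{pmatrix}=\begin{pmatrix}-f\\ -g\end{pmatrix}=-\alpha^T,\qquad \beta^TJ=\alpha .
\]
Before using these, I would first check that $J$ is a genuine degree-$0$ bundle isomorphism $\dH\to\dH^*$: the summand $\oplus\mathcal{O}(a_i)$ of $\dH$ is carried isomorphically onto the $\oplus\mathcal{O}(a_i)$ block of $\dH^*$ and $\oplus\mathcal{O}(-a_i)$ onto $\oplus\mathcal{O}(-a_i)$, so the displayed equalities are legitimate identities of sheaf maps. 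Together with the obvious $J^T=-J$, these are the only inputs both parts require.

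For (i), the display gives $\dG=\coker(\beta\colon\mathcal{O}(-c)\to\dH)$ from (\ref{seqB}). Dualizing the locally free sequence (\ref{seqC}) yields $0\to\mathcal{O}(-c)\xrightarrow{\alpha^T}\dH^*\to\dF^*\to 0$, so $\dF^*=\coker(\alpha^T)$. The identity $J\beta=-\alpha^T$ says precisely that $J$ intertwines $\beta$ and $\alpha^T$ up to the scalar $-1$ on $\mathcal{O}(-c)$; hence the square
\[
\xymatrix{
\mathcal{O}(-c)\ar[r]^-{\beta}\ar[d]^{-\id} & \dH\ar[d]^{J}\\
\mathcal{O}(-c)\ar[r]^-{\alpha^T} & \dH^*
}
\]
commutes, and since $J$ is an isomorphism it descends to an isomorphism of cokernels $\dG\xrightarrow{\sim}\dF^*$. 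This proves $\dF^*\cong\dG$.

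For (ii), the two identities say exactly that the monad (\ref{eqn13}), with $a=\beta$, $c=\alpha$, and $b=J$, is self-dual in the sense of Barth--Hulek (\cite{BH78}): the isomorphism $b=J$ satisfies $b=-b^T$, and the relation $c=a^Tb$ is precisely $\alpha=\beta^TJ$. By the definition of a self-dual monad, $J$ then induces a form $f$ on the cohomology bundle $\dE$, exhibiting $(\dE,f)$ in the situation of Theorem \ref{thm1.22}; because $J=J_{2(n+1)}$ is nondegenerate and alternating, the induced $f$ is a nondegenerate element of $H^0(\wedge^2\dE)$, so $\dE$ is a symplectic bundle.

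The routine parts are the two block computations; the only place demanding care --- and the main obstacle --- is the sign bookkeeping that guarantees the induced form on $\dE$ is genuinely \emph{alternating} rather than a symmetric self-duality. Concretely, one must track that the antisymmetry $J^T=-J$ together with the specific $-f$ block in $\beta$ are exactly what force the self-dual diagram of the Barth--Hulek definition to commute (so that $f^T=-f$), and that $J$ being an isomorphism makes the induced form nondegenerate. Once these signs are verified against the conventions, both statements follow at once.
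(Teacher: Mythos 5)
Your proof is correct and follows essentially the same route as the paper: the paper's entire argument is the commutative diagram with $-\id$, $J_{2n+2}$, $\id$ exhibiting an isomorphism between the monad and its dual, from which both claims are read off. You simply make explicit the block computations $J\beta=-\alpha^T$, $\beta^TJ=\alpha$ and the resulting self-duality in the Barth--Hulek sense, which the paper leaves as ``follows immediately.''
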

\begin{proof}
	(i)\&(ii) Set $J_{2n+2}:=\begin{pmatrix}
0 & I_{n+1}\\
-I_{n+1} & 0
\end{pmatrix},$ where $I_{n+1}$ is the $(n+1)\times (n+1)$ identity matrix.	
The following commutative diagram
	$$
	\xymatrix{
		{0}\ar[r] & \mathcal{O}(-c)\ar[r]^{\beta}\ar[d]^{-\id}  & \dH\ar[d]^{J_{2n+2}}\ar[r]^{\alpha}                         & \mathcal{O}(c)
	\ar[r]\ar[d]^{\id} &  {0}  \\
		{0}\ar[r] & \mathcal{O}(-c)\ar[r]^{\alpha^T}  & \mathcal{H}^*\ar[r]^{\beta^T} & \mathcal{O}(c)\ar[r] &  {0} 
	}
	$$	
gives an isomorphism between these two monads. Then the proposition follows immediately.
\end{proof}
\begin{proposition}\label{prop2.12}
	(i) If $c>2a_1+a_2$, $\mathcal{E}$ is simple.\\
	(ii) If $c>(2n+1)a_1$, $\mathcal{G}$ and $\mathcal{F}$ are both stable.
\end{proposition}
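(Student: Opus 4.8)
Since $\dE$ is symplectic (Proposition \ref{PropR1}) we have $\dE\cong\dE^{*}$, so $\dE nd\,\dE\cong\dE\otimes\dE$ and it is enough to prove $h^{0}(\dE\otimes\dE)=1$. The plan is to squeeze $H^{0}(\dE\otimes\dE)$ between two computable groups using the display. Tensoring \eqref{seqC} by $\dE$ gives $0\to\dF\otimes\dE\to\dH\otimes\dE\to\dE(c)\to0$, so $H^{0}(\dF\otimes\dE)$ embeds into $H^{0}(\dH\otimes\dE)=\bigoplus_{i}\big(H^{0}(\dE(a_{i}))\oplus H^{0}(\dE(-a_{i}))\big)$; tensoring \eqref{seqA} by $\dE$ gives $0\to\dE(-c)\to\dF\otimes\dE\to\dE\otimes\dE\to0$, and since $H^{0}(\dE(-c))=0$ (a special case of the vanishing below) the connecting map realizes $H^{0}(\dE\otimes\dE)/H^{0}(\dF\otimes\dE)$ as a subspace of $H^{1}(\dE(-c))=M_{0}=\mathbb{C}$ (Proposition \ref{prop2.5}(i)). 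Everything then reduces to the vanishing $H^{0}(\dE(\pm a_{i}))=0$: granting it, $H^{0}(\dH\otimes\dE)=0$ forces $H^{0}(\dF\otimes\dE)=0$ and $H^{0}(\dE\otimes\dE)\hookrightarrow\mathbb{C}$, whence $h^{0}(\dE\otimes\dE)=1$ because the identity already occupies that dimension. To obtain the vanishing I note that for $|t|\le a_{1}<c$ the sequence \eqref{seqA} gives $H^{0}(\dE(t))\cong H^{0}(\dF(t))$, the degree-$t$ piece of the syzygy module $F=\ker(\alpha\colon H\to S(c))$. As $(f_{1},\dots,f_{n+1},g_{1},\dots,g_{n+1})$ is a regular sequence, $F$ is generated by Koszul syzygies of minimal degree $c-a_{1}-a_{2}$, so $H^{0}(\dF(t))=0$ for $t<c-a_{1}-a_{2}$; the hypothesis $c>2a_{1}+a_{2}$ is precisely $a_{1}<c-a_{1}-a_{2}$, which kills $H^{0}(\dE(\pm a_{i}))$ and closes (i).

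\textbf{Part (ii).} Because $\dG\cong\dF^{*}$ (Proposition \ref{PropR1}), stability of $\dG$ is equivalent to stability of $\dF$, so I will treat $\dF$, which has $\rk\dF=2n+1$, $c_{1}(\dF)=-c$ and $\mu(\dF)=-c/(2n+1)$. The plan is to invoke Hoppe's criterion: $\dF$ is stable once $H^{0}\big((\wedge^{q}\dF)(j)\big)=0$ for all $1\le q\le 2n$ and all $j\le qc/(2n+1)$, since a rank-$q$ destabilizing subsheaf would produce $\mathcal{O}(d)\hookrightarrow\wedge^{q}\dF$ with $-d\le qc/(2n+1)$. To compute these groups I will exploit the complete intersection structure: the exact complex $0\to\wedge^{q}\dF\to\wedge^{q}\dH\to\wedge^{q-1}\dH(c)\to\cdots\to\mathcal{O}(qc)\to0$ identifies $H^{0}_{*}(\wedge^{q}\dF)$ with the module of Koszul $q$-cycles of $(f_{i},g_{i})$, which by exactness of the Koszul complex equals the module of $q$-boundaries. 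These boundaries are generated in degrees $\ge c-\sum_{i=1}^{\rho}a_{i}$, where $-\sum_{i=1}^{\rho}a_{i}$ with $\rho=\min(q+1,2n+1-q)$ is the least weight occurring in $\wedge^{q+1}\dH$; hence $H^{0}\big((\wedge^{q}\dF)(j)\big)=0$ for $j<c-\sum_{i=1}^{\rho}a_{i}$.

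It then remains to verify that this vanishing covers the destabilizing range. Since $\rho\le 2n+1-q$ and each $a_{i}\le a_{1}$, the hypothesis $c>(2n+1)a_{1}$ yields
$$
c-\sum_{i=1}^{\rho}a_{i}\ \ge\ c-(2n+1-q)a_{1}\ >\ c-(2n+1-q)\frac{c}{2n+1}\ =\ \frac{qc}{2n+1}
$$
for every $1\le q\le 2n$, so indeed $H^{0}\big((\wedge^{q}\dF)(j)\big)=0$ for all $j\le qc/(2n+1)$, and $\dF$ (hence $\dG$) is stable. I expect the main obstacle to be in Part (ii): passing rigorously from the sheaf-level Koszul resolution to the graded-module identification of $H^{0}_{*}(\wedge^{q}\dF)$ with Koszul boundaries---exactly where the complete intersection hypothesis is essential---and checking that the minimal-degree bound governed by $\rho=\min(q+1,2n+1-q)$ is uniform across all exterior powers, including the range $q>n$ where the least weight of $\wedge^{q+1}\dH$ involves cancellation among the $\pm a_{i}$. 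Part (i) is comparatively soft once the same syzygy-degree estimate is available at $q=1$.
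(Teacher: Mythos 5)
Your argument for part (i) is essentially the paper's: the same chain of reductions $H^0(\dE\otimes\dE)\twoheadleftarrow$ (modulo $H^1(\dE(-c))=M_0=\mathbb{C}$) $H^0(\dF\otimes\dE)\hookrightarrow H^0(\dH\otimes\dE)\cong H^0(\dF\otimes\dH)$, with the final vanishing coming from the Koszul resolution of $F=H^0_*(\dF)$, whose generators sit in degrees $\geq c-a_1-a_2>a_1$; the paper packages that last step as Corollary \ref{cor2.7}(i) rather than twist-by-twist, but the content is identical. For part (ii) you diverge genuinely: the paper simply cites Bohnhorst--Spindler's stability criterion for homological dimension $1$ bundles (Theorem 2.7 of \cite{BS92}), which applied to the resolution $0\to\mathcal{O}(-c)\to\dH\to\dG\to0$ reads $a_1<\mu(\dG)=c/(2n+1)$, i.e.\ exactly $c>(2n+1)a_1$. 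You instead reprove stability of $\dF$ from scratch via Hoppe's criterion, identifying $H^0_*(\wedge^q\dF)$ with the Koszul $q$-cycles $=$ $q$-boundaries of the regular sequence and reading off the minimal generator degree $c-\sum_{i=1}^{\rho}a_i$ with $\rho=\min(q+1,2n+1-q)$; your weight count for $\wedge^{q+1}\dH$ (taking all $n+1$ positive twists and discarding the smallest $a_i$'s when $q>n$) is correct, and the estimate $c-\sum_{i=1}^{\rho}a_i\geq c-(2n+1-q)a_1>qc/(2n+1)$ does cover the whole destabilizing range. What the citation buys is brevity and a necessary-and-sufficient statement; what your argument buys is self-containedness and a transparent explanation of where the coefficient $2n+1$ in the hypothesis comes from. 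Both are valid; your writeup is correct.
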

\begin{proof}	
	(i) Tensoring \eqref{seqA} by $\dE$, we have the following exact sequence
	\begin{equation}\label{3.8.1}
	0\rightarrow\mathcal{E}(-c)\rightarrow\dE\otimes\dF\rightarrow\dE\otimes\dE\rightarrow0
	\end{equation}
	Passing to cohomology, the exact sequence (\ref{3.8.1}) yields the following exact sequence
	$$
	0\rightarrow H^0(\dE(-c))\rightarrow H^0(\dE\otimes\dF)\rightarrow H^0(\dE\otimes\dE)\rightarrow H^1(\dE(-c))
	$$
	Since $\id_\dE\in \Hom(\dE,\dE)$ and $\dE$ is self-dual, then $h^0(\dE^*\otimes\dE)=h^0(\dE\otimes\dE)\geq 1$. Since $H^1_*(\dE)=M\otimes S(c)$, then $h^1(\dE(-c))=1$. To show $h^0(\dE\otimes\dE)=1$, it is enough to show $h^0(\dF\otimes\dE)=0$.
	
	Tensoring \eqref{seqC} by $\dE$, we have the following exact sequence
	\begin{equation}\label{3.8.2}
	0\rightarrow\dF\otimes\dE\rightarrow\dH\otimes\dE\rightarrow\dE(c)\rightarrow0
	\end{equation}
	Passing to cohomology, the exact sequence (\ref{3.8.2}) yields the following exact sequence 
	$$
	0\rightarrow H^0(\dF\otimes\dE)\rightarrow H^0(\dH\otimes\dE)
	$$
	Hence it is enough to show $h^0(\dH\otimes \dE)=0$.
	
	Tensoring \eqref{seqA} by $\dH$, we have the following the exact sequence
	\begin{equation}\label{3.8.3}
	0\rightarrow\dH(-c)\rightarrow\dF\otimes\dH\rightarrow\dE\otimes\dH\rightarrow0
	\end{equation}
	Passing to cohomology, the exact sequence (\ref{3.8.3}) yields the following exact sequence
	$$
	0\rightarrow H^0(\mathcal{H}(-c))\rightarrow H^0(\mathcal{F}\otimes\dH)\rightarrow H^0(\dE\otimes\dH)\rightarrow H^1(\dH(-c))
	$$
	where $H^0(\mathcal{H}(-c))=H^1(\mathcal{H}(-c))=0$ since $c>a_1\geq\cdots\geq a_{n+1}\geq 0$. Thus, $H^0(\mathcal{F}\otimes\dH)\cong H^0(\dE\otimes\dH)$. By Corollary \ref{cor2.7} (i), $h^0(\mathcal{F}\otimes\dH)=0$ since $c>2a_1+a_2$. Hence $h^0(\mathcal{F}\otimes\dE)=h^0(\dE\otimes \dH)=h^0(\mathcal{F}\otimes\dH)=0$. Therefore, $\mathcal{E}$ is simple.
	
	(ii) This follows from Theorem 2.7 in \cite{BS92} immediately.
\end{proof}
\begin{theorem}\label{thm3.5}
	Let $\dE$ be a special generalized null correlation bundle on a smooth complete intersection $X$ of dimension $5$ satisfying $c>2a_1+a_2$. Then $\dE$ is stable.
\end{theorem}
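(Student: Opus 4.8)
The plan is to verify Mumford--Takemoto stability directly against the slope $\mu(\dE)=0$, which is forced by Proposition \ref{PropR1}: since $\dE$ is symplectic, $\dE\cong\dE^*$ and hence $c_1(\dE)=0$. Because $\Pic(X)=\mathbb{Z}\cdot H$, it suffices to rule out every saturated subsheaf $\dF'\subsetneq\dE$ of rank $s\in\{1,2,3\}$ with $c_1(\dF')=dH$ and $d\geq0$ (such a $\dF'$ is the only type that can destroy stability). Any such subsheaf yields, via the induced map $\det\dF'=\mathcal{O}(d)\hookrightarrow\wedge^s\dE$, a nonzero section of $\wedge^s\dE(-d)$. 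Since $d\geq0$, multiplication by a fixed degree-$d$ form embeds $H^0(\wedge^s\dE(-d))$ into $H^0(\wedge^s\dE)$ (torsion-freeness), so the whole problem reduces to controlling $H^0(\wedge^s\dE(-d))$ for $d\geq0$ and $s=1,2,3$.

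First I would dispose of ranks $1$ and $3$. For $s=1$ a destabilizer would give $0\neq H^0(\dE(-d))\hookrightarrow H^0(\dE)$ with $d\geq0$, contradicting $h^0(\dE)=0$ from Corollary \ref{cor2.7}(iv). For $s=3$, the self-duality $\dE\cong\dE^*$ of Proposition \ref{PropR1} together with $\det\dE=\mathcal{O}$ gives $\wedge^3\dE\cong\dE^*\cong\dE$, so $H^0(\wedge^3\dE(-d))\cong H^0(\dE(-d))=0$ for $d\geq0$, again by Corollary \ref{cor2.7}(iv). Both cases are therefore immediate.

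The hard part will be the rank-$2$ case, and the obstruction is intrinsic: the symplectic form $\Gamma$ is a nonzero element of $H^0(\wedge^2\dE)$, so the naive vanishing $H^0(\wedge^2\dE)=0$ fails and the usual Hoppe-type criterion cannot be applied blindly. To circumvent this I would use simplicity. By Proposition \ref{prop2.12}(i), $\dE$ is simple, so $h^0(\dE nd\,\dE)=1$; combining $\dE nd\,\dE\cong\dE\otimes\dE=S^2\dE\oplus\wedge^2\dE$ (through $\dE\cong\dE^*$) with the fact that $\Gamma$ already contributes one section to $\wedge^2\dE$, I obtain $h^0(\wedge^2\dE)=1$ and $h^0(S^2\dE)=0$. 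Contraction by the symplectic form splits $\wedge^2\dE\cong\mathcal{O}\oplus\mathcal{W}$, where $\mathcal{O}$ is spanned by $\Gamma$ and $\mathcal{W}$ is the primitive part, and the count forces $h^0(\mathcal{W})=0$. Consequently, for $d\geq1$ one has $H^0(\wedge^2\dE(-d))=H^0(\mathcal{O}(-d))\oplus H^0(\mathcal{W}(-d))=0$, which excludes rank-$2$ destabilizers of positive degree. For the borderline value $d=0$, the section produced by $\det\dF'$ lies in $H^0(\wedge^2\dE)=\mathbb{C}\cdot\Gamma$, hence equals $\lambda\Gamma$ with $\lambda\neq0$; but a section coming from a rank-$2$ subsheaf is generically decomposable, so $s\wedge s=0$, whereas $\lambda\Gamma\wedge\lambda\Gamma=\lambda^2\,\Gamma\wedge\Gamma$ is a nowhere-vanishing section of $\wedge^4\dE=\mathcal{O}$ because $\Gamma$ is nondegenerate. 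This contradiction eliminates the case $d=0$ as well, completing the rank-$2$ analysis and thus the proof. I emphasize that, after the vanishing $h^0(\dE)=0$, simplicity, and self-duality are in hand, the entire stability argument is formal, so it applies verbatim on any smooth $X$ of dimension $5$ with $\Pic(X)=\mathbb{Z}\cdot H$ on which these three inputs hold.
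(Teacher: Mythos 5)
Your proof is correct, and it rests on exactly the same three inputs that the paper's proof assembles: simplicity of $\dE$ (Proposition \ref{prop2.12}(i)), the symplectic self-duality $\dE\cong\dE^*$ (Proposition \ref{PropR1}(ii)), and $h^0(\dE)=0$ (Corollary \ref{cor2.7}(iv)). The only difference is that the paper stops there and quotes Theorem 3.5 of \cite{AO94}, whereas you reprove that criterion from scratch: your rank-$2$ analysis --- the count $h^0(S^2\dE)+h^0(\wedge^2\dE)=h^0(\dE nd\,\dE)=1$ forcing $H^0(\wedge^2\dE)=\mathbb{C}\cdot\Gamma$, combined with the observation that a section induced by a rank-$2$ subsheaf is generically decomposable (so $s\wedge s=0$) while $\Gamma\wedge\Gamma$ is the nowhere-vanishing Pfaffian --- is precisely the content of the cited result, and your rank-$1$ and rank-$3$ cases are the routine part of the same Hoppe-type argument. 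So the route is the same, just with the black box opened. What the expansion buys is a self-contained proof that makes explicit where each hypothesis enters, and in particular verifies that the conclusion really does hold on any smooth five-dimensional complete intersection $X$ (where $\Pic(X)=\mathbb{Z}\cdot H$ by Grothendieck--Lefschetz), which is relevant because the theorem is stated for such an $X$ even though the surrounding sections work on $\mathbb{P}^5$.
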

\begin{proof}
	Since $c>2a_1+a_2$, $\dE$ is simple by Proposition \ref{prop2.12} (i). By Proposition \ref{PropR1} (ii), $\dE$ is symplectic. In particular, $\dE\cong \dE^*$. Hence $h^0(\dE\otimes\dE)=h^0(\dE^*\otimes\dE)=1$. By Corollary \ref{cor2.7} (iv), $H^0(\dE)=0$. Therefore, the theorem follows from Theorem 3.5 in \cite{AO94}.
\end{proof}

\section{Moduli space of special generalized null correlation bundles on $\mathbb{P}^5$}
In this section, we will work on $\mathbb{P}^{5}$. We assume that
$\dH$ is as defined in Definition \ref{def2.1} and $\dE$ is a special generalized null correlation bundle on $\mathbb{P}^{5}$ with $a_1\geq a_2\geq a_3\geq 0$ and $c>5a_1$. $\mathcal{F}$ and $\dG$ are as defined in the display of the monad (\ref{eqn13}). Then $\dE$, $\dF$ and $\dG$ are all stable.

\begin{proposition}\label{thm4.1}

	(i) $h^2(\dE nd\;\dE)=h^0(\wedge^2\dH)-1-\chi(\dE(c))$.	\\
	(ii) $h^1(\dE nd\;\dE)=h^0(\wedge^2\dH)-1+h^0(\mathcal{H}(c))-h^0(\mathcal{H}\otimes \dH)$.
\end{proposition}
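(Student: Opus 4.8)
The plan is to compute both cohomology groups $h^i(\dE nd\,\dE) = h^i(\dE^* \otimes \dE) = h^i(\dE \otimes \dE)$ (using $\dE \cong \dE^*$ from Proposition \ref{PropR1}) by relating the endomorphism bundle to $\dH$-twisted sheaves via the four display sequences, exactly as in the proof of Proposition \ref{prop2.12}(i). For part (i), I would first use \eqref{seqA} tensored by $\dE$, namely the sequence \eqref{3.8.1}, and pass to cohomology in degree $2$. The key inputs are $H^i_*(\dE) = 0$ for $2 \le i \le 2n-1 = 9$ (Proposition \ref{prop2.5}(ii)), which kills $H^2(\dE(-c))$ and relates $h^2(\dE\otimes\dE)$ to $h^2(\dE\otimes\dF)$ and $h^1(\dE\otimes\dF)$; I expect the reduction to funnel everything through $\dE\otimes\dH$ and $\dE(c)$ just as in Proposition \ref{prop2.12}. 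The appearance of $\chi(\dE(c))$ suggests that the term $h^2$ will ultimately be expressed through a Riemann--Roch / Euler characteristic once the higher cohomology of $\dE(c)$ is shown to vanish via \eqref{seqD} (where $\dG$ is stable, hence has controlled cohomology).

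For part (ii), the natural route is again the long exact sequence from \eqref{3.8.1}, now reading off $h^1(\dE\otimes\dE)$. The segment
\[
0 \to H^0(\dE(-c)) \to H^0(\dE\otimes\dF) \to H^0(\dE\otimes\dE) \to H^1(\dE(-c)) \to H^1(\dE\otimes\dF) \to \cdots
\]
links $h^1(\dE\otimes\dE)$ to $h^1(\dE\otimes\dF)$ and $h^1(\dE(-c))$. Since $H^1_*(\dE) = M \otimes S(c)$, the twisted term $h^1(\dE(-c)) = \dim M_0 = 1$ is explicit. I would then convert $\dE\otimes\dF$ into $\dE\otimes\dH$ and $\dE(c)$ using sequence \eqref{3.8.2}, and convert $\dE\otimes\dH$ back through \eqref{3.8.3} into $\dF\otimes\dH$, whose cohomology is already pinned down by Corollary \ref{cor2.7}. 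Corollary \ref{cor2.7}(ii) directly gives $h^1(\dE\otimes\dH) = h^0(\dH(c)) - h^0(\dH\otimes\dH)$, which is precisely the combination $h^0(\mathcal{H}(c)) - h^0(\mathcal{H}\otimes\dH)$ appearing in the claimed formula, and the summand $h^0(\wedge^2\dH) - 1$ should emerge as $h^0(\dE(c))$ from Corollary \ref{cor2.7}(iii). So the strategy is to assemble these known quantities through the connecting maps.

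The main obstacle, I expect, is controlling the boundary maps in the long exact sequences precisely enough to get equalities rather than mere inequalities — in particular, verifying that certain maps are zero or surjective so that the alternating-sum bookkeeping collapses to the stated closed forms. Concretely, I must confirm the vanishing of $H^2(\dE\otimes\dF)$ (or identify it with something computable) and track whether the connecting homomorphism $H^0(\dE\otimes\dE) \to H^1(\dE(-c))$ is zero or an isomorphism onto its image; since $\dE$ is simple with $h^0(\dE\otimes\dE) = 1$ (from the proof of Proposition \ref{prop2.12}(i)) and $h^1(\dE(-c)) = 1$, the behavior of this single map is exactly what distinguishes the $h^1$ and $h^2$ formulas. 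Establishing these vanishings will lean on the resolutions \eqref{LES1}–\eqref{LES2} and the complete-intersection structure of $M$, together with the hypothesis $c > 2a_1 + a_2$ that powers Corollary \ref{cor2.7}. Once the boundary maps are understood, both formulas follow by substituting the Corollary \ref{cor2.7} identities and, for (i), an application of Riemann--Roch to replace $h^0(\dE(c))$-type data by $\chi(\dE(c))$.
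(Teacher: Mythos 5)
Your strategy is sound and, for part (ii), genuinely different from the paper's. For (i) the paper works with $\dF\otimes\dF$ rather than $\dE\otimes\dF$: it tensors \eqref{seqC} by $\dF$, observes that $H^1(1\otimes\alpha):H^1(\dF\otimes\dH)\to H^1(\dF(c))$ vanishes because $H^1_*(\dF)\cong M\otimes S(c)$ is annihilated by the ideal $\langle f_i,g_i\rangle$, and concludes $h^2(\dE nd\,\dE)=h^1(\dF(c))=h^1(\dE(c))=h^0(\dE(c))-\chi(\dE(c))$; your reduction through $\dE\otimes\dF$ lands in the same place. For (ii) the paper instead tensors \eqref{seqD} by $\dE$ to get $0\to\dE nd\,\dE\to\dE\otimes\dG\to\dE(c)\to0$ and does an Euler-characteristic bookkeeping, computing $h^0(\dE\otimes\dG)=1$, $h^2(\dE\otimes\dG)=0$ and $h^1(\dE\otimes\dG)=h^1(\dE\otimes\dH)$ via further sequences, then combines with (i). Your route stays entirely inside \eqref{3.8.1} and \eqref{3.8.2}, avoids $\dG$, and is arguably more direct: the connecting map $H^0(\dE\otimes\dE)\to H^1(\dE(-c))$ is forced to be an isomorphism (it is injective since $h^0(\dE\otimes\dF)=0$, and both spaces are one-dimensional), so $H^1(\dE\otimes\dF)\cong H^1(\dE\otimes\dE)$, and then \eqref{3.8.2} gives $h^1(\dE\otimes\dF)=h^0(\dE(c))+h^1(\dE\otimes\dH)$ \emph{provided} the map $H^1(\alpha\otimes\id):H^1(\dH\otimes\dE)\to H^1(\dE(c))$ vanishes. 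That last vanishing is the one step you flag but do not carry out; it is exactly the module-theoretic observation the paper makes in part (i) — the map is multiplication by the $f_i,g_i$ on $H^1_*(\dE)\cong M\otimes S(c)$, and these annihilate $M$ — so your plan closes up once you state it. One small slip: on $\mathbb{P}^5$ we have $n=2$, so Proposition \ref{prop2.5}(ii) gives vanishing for $2\le i\le 2n-1=3$, not $9$; this does not affect the argument since you only need $i=2,3$.
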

\begin{proof}
	(i) From the display of the monad (\ref{eqn13}), we have the following short exact sequence
	$$
	0\rightarrow\mathcal{F}\rightarrow\dH\xrightarrow{\alpha}\mathcal{O}(c)\rightarrow0
	$$
	where $\alpha=(f_1,f_2,f_{3},g_{1},g_2,g_{3})$. Tensoring it by $\dF$, we have the following short exact sequence
	\begin{equation*}\label{5.1.1}
	0\rightarrow\dF\otimes\dF\rightarrow\dF\otimes\dH\xrightarrow{1\otimes\alpha}\dF\otimes\mathcal{O}(c)\rightarrow0
	\end{equation*}
	Passing to cohomology, we have the following exact sequence
	$$
	H^1(\dF\otimes\dH)\xrightarrow{H^1(1\otimes\alpha)} H^1(\dF(c))\rightarrow H^2(\dF\otimes\dF)\rightarrow H^2(\dF\otimes\dH)
	$$
	where $H^2(\dF\otimes\dH)=0$. Since $H^1_*(\dF)\cong M\otimes S(c)$, then $H^1(1\otimes\alpha)=0$. Therefore, $H^1(\mathcal{F}(c))\cong H^2(\mathcal{F}\otimes\mathcal{F})$.
	\begin{claim}\label{claim5.2}
		$H^2(\mathcal{F}\otimes\dF)\cong H^2(\dE\otimes\dE)$.
	\end{claim}
Tensoring \eqref{seqA} by $\dE$, we have the following short exact sequence
\begin{equation}\label{5.1.2}
0\rightarrow \dE(-c)\rightarrow\dE\otimes\dF\rightarrow\dE\otimes\dE\rightarrow0
\end{equation}
Passing to cohomology, the exact sequence (\ref{5.1.2}) yields $H^2(\dE\otimes\dE)\cong H^2(\dE\otimes\dF)$ since $H^2(\dE(-c))\cong H^3(\dE(-c))=0$. Tensoring \eqref{seqA} by $\dF$, we have the following short exact sequence
\begin{equation}\label{5.1.3}
0\rightarrow\dF(-c)\rightarrow\dF\otimes\dF\rightarrow\dE\otimes\dF\rightarrow0
\end{equation}
Passing to cohomology, the exact sequence (\ref{5.1.3}) yields $H^2(\dF\otimes\dF)\cong H^2(\dE\otimes\dF)$ since $H^2(\dF(-c))\cong H^3(\dF(-c))=0$. Hence Claim \ref{claim5.2} follows.

Since $\dE$ is self-dual, then $h^2(\dE nd\;\dE)=h^2(\dE\otimes\dE)=h^2(\dF\otimes\dF)=h^1(\dF(c))$. By Proposition \ref{prop2.5}(i), $h^2(\dE nd\;\dE)=h^1(\dF(c))=h^1(\dE(c))$. On the other hand, by Proposition \ref{prop2.5} (ii), $h^i(\dE(c))=0$ for $i=2,3$. By Serre duality, $h^{4}(\dE(c))=h^1(\dE(-c-6))=0$ and $h^{5}(\dE(c))=h^0(\dE(-c-6))=0$. Thus, $\chi(\dE(c))=h^0(\dE(c))-h^1(\dE(c))$. Therefore, $h^2(\dE nd\;\dE)=h^0(\dE(c))-\chi(\dE(c))$. Since $h^0(\dE(c))=h^0(\wedge^2\dH)-1$ (Corollary \ref{cor2.7}(iii)), then $h^2(\dE nd\;\dE)=h^0(\wedge^2\dH)-1-\chi(\dE(c))$. \\
(ii) Tensoring \eqref{seqD} by $\dE$, since $\dE$ is self-dual, we have the following short exact sequence
\begin{equation*}\label{5.1.4}
0\rightarrow\dE nd\;\dE\rightarrow\dE\otimes\dG\rightarrow\mathcal{E}(c)\rightarrow0
\end{equation*}
Passing to cohomology, we have the following exact sequence
\begin{equation*}
0\rightarrow H^0(\dE nd\;\dE)\rightarrow H^0(\dE\otimes\dG)\rightarrow H^0(\dE(c))
\end{equation*}
$$
\rightarrow H^1(\dE nd\;\dE)\rightarrow H^1(\dE\otimes\dG)\rightarrow H^1(\dE(c))
$$
\begin{equation*}
\rightarrow H^2(\dE nd\;\dE)\rightarrow H^2(\dE\otimes\dG)\rightarrow H^2(\dE(c))
\end{equation*}
where $H^2(\dE(c))=0$ by Proposition \ref{prop2.5} (ii). Thus,
\begin{equation}\label{RREQ}
-h^0(\dE nd\;\dE)+h^1(\dE nd\;\dE)-h^2(\dE nd\;\dE)
\end{equation}
\begin{equation*}
=-(h^0(\dE\otimes\dG)-h^1(\dE\otimes\dG)+h^2(\dE\otimes\dG))+(h^0(\dE(c))-h^1(\dE(c)))
\end{equation*}
Tensoring \eqref{seqA} by $\dG$, we have the following short exact sequence
$$
0\rightarrow \dG(-c)\rightarrow \dG\otimes\dF\rightarrow\dG\otimes\dE\rightarrow0
$$
Passing to cohomology, we have the following long exact sequence
$$
0\rightarrow H^0(\dG(-c))\rightarrow H^0(\dG\otimes\dF)\rightarrow H^0(\dG\otimes\dE)
$$
$$
\rightarrow H^1(\dG(-c))\rightarrow H^1(\dG\otimes\dF)\rightarrow H^1(\dG\otimes\dE)
$$
$$
\rightarrow H^2(\dG(-c))\rightarrow H^2(\dG\otimes\dF)\rightarrow H^2(\dG\otimes\dE)\rightarrow H^3(\dG(-c))
$$
where $h^i(\dG(-c))=0$ for $0\leq i\leq 3$ and $h^0(\dG\otimes\dF)=1$ since $\dG$ is stable and $\dG^*\cong\dF$. Thus,  
\begin{equation}\label{REQ6}
h^0(\dE\otimes\dG)=h^0(\dG\otimes\dF)=1
\end{equation}
and $h^i(\dG\otimes\dF)=h^i(\dG\otimes\dE), i=1,2$. Since $\dE$ is stable, then
\begin{equation}\label{REQ10}
h^0(\dE nd\;\dE)=1.
\end{equation}
Substituting (\ref{REQ6}) and (\ref{REQ10}) into (\ref{RREQ}), we have
\begin{equation}\label{RREQ1}
h^1(\dE nd\;\dE)-h^2(\dE nd\;\dE)=(h^1(\dE\otimes\dG)-h^2(\dE\otimes\dG))+(h^0(\dE(c))-h^1(\dE(c)))
\end{equation}
Tensoring \eqref{seqB} by $\mathcal{F}$, we have the following short exact sequence
$$
0\rightarrow\dF(-c)\rightarrow\dF\otimes\dH\rightarrow\dF\otimes\dG\rightarrow0
$$
Passing to cohomology, we have the following long exact sequence
$$
0\rightarrow H^0(\mathcal{F}(-c))\rightarrow H^0(\mathcal{F}\otimes\dH)\rightarrow H^0(\dF\otimes\dG)
$$
$$
\rightarrow H^1(\mathcal{F}(-c))\rightarrow H^1(\mathcal{F}\otimes\dH)\rightarrow H^1(\dF\otimes\dG)
$$
$$
\rightarrow H^2(\mathcal{F}(-c))\rightarrow H^2(\mathcal{F}\otimes\dH)\rightarrow H^2(\dF\otimes\dG)\rightarrow H^3(\mathcal{F}(-c))
$$
where $h^i(\mathcal{F}(-c))=0$ for $i=0,2,3$. Since $H^0(\mathcal{F}\otimes\dH)=0$ (by Corollary \ref{cor2.7}(i)) and $h^1(\mathcal{F}(-c))=h^0(\dF\otimes\dG)=1$, then $H^1(\mathcal{F}(-c))\cong H^0(\mathcal{F}\otimes\dG)$. Since $H^2(\mathcal{F}\otimes\dH)=0$, then $h^2(\dF\otimes\dG)=0$ and $H^1(\dF\otimes\dH)\cong H^1(\dF\otimes\dG)$.

Therefore, 
\begin{equation}\label{RREQ7}
h^2(\dE\otimes\dG)=h^2(\dG\otimes\dF)=0
\end{equation}
and $h^1(\dE\otimes \dG)=h^1(\dG\otimes\dF)=h^1(\dF\otimes\dH)=h^1(\dE\otimes\dH)$. Moreover, by Corollary \ref{cor2.7} (ii), 
\begin{equation}\label{RREQ8}
h^1(\dE\otimes \dG)=h^0(\dH(c))-h^0(\dH\otimes\dH)
\end{equation}
Substituting (\ref{RREQ7}) and (\ref{RREQ8}) into (\ref{RREQ}), we have
\begin{equation}\label{RREQ10}
h^1(\dE nd\;\dE)-h^2(\dE nd\;\dE)=h^0(\dE(c))-h^1(\dE(c))+h^0(\dH(c))-h^0(\dH\otimes\dH)
\end{equation}
By (i), $h^1(\dE nd\;\dE)=h^0(\wedge^2\dH)-1+h^0(\mathcal{H}(c))-h^0(\mathcal{H}\otimes \dH)$.
\end{proof}
\begin{remark}
By (\ref{seqA}) and (\ref{seqC}), one can see that $\chi(\dE(c))=h^0(\dH(c))-h^0(\mathcal{O}(2c))-1$. Therefore,
$h^2(\dE nd\;\dE)=h^0(\wedge^2\dH)+h^0(\mathcal{O}(2c))-h^0(\dH(c))$.
\end{remark}
Denote by $N(c,a_1,a_2,a_{3})$ the subvariety of $\mathcal{M}_{\mathbb{P}^{5}}(4;t_1,t_2,t_3,t_{4})$ parametrizing stable special generalized null correlation bundles on $\mathbb{P}^{5}$, where each $t_i$ is a polynomial in $c$, $a_1,a_2,a_{3}$ by Remark \ref{rmk2.21}. 
\begin{proposition}\label{thm4.4}
	$\dim N(c,a_1,a_2,a_{3})\geq h^0(\wedge^2\dH)-1+h^0(\dH(c))-h^0(\dH\otimes\dH)$.
\end{proposition}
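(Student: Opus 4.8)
The plan is to realize $N(c,a_1,a_2,a_3)$ as the image of a space of monads under a classifying map, and then to bound the dimension of its fibres from above. Let $W\subseteq H^0(\mathcal{H}(c))$ be the open locus of tuples $\alpha=(f_1,f_2,f_3,g_1,g_2,g_3)$ for which $\alpha$ is surjective (equivalently, the $f_i,g_i$ have no common zero on $\mathbb{P}^5$); since $\mathcal{H}\cong\mathcal{H}^*$ we have $\dim W=h^0(\mathcal{H}(c))$, and $W$ is nonempty and irreducible. Because the special form of the monad \eqref{eqn13} is determined by $\alpha$ alone, with $\beta=(g_1,g_2,g_3,-f_1,-f_2,-f_3)^T$, each $\alpha\in W$ produces a special generalized null correlation bundle $\mathcal{E}_\alpha$, which is stable by Theorem \ref{thm3.5} (as $c>5a_1\geq 2a_1+a_2$). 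The resulting flat family $\{\mathcal{E}_\alpha\}_{\alpha\in W}$ induces a classifying morphism $\pi:W\to\mathcal{M}_{\mathbb{P}^{5}}(4;t_1,t_2,t_3,t_4)$ whose image is exactly $N(c,a_1,a_2,a_3)$. Since $W$ is irreducible and $\pi$ is surjective onto $N$, we have $\dim N=\dim W-\dim(\text{general fibre of }\pi)$, so it suffices to bound the general fibre dimension.

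First I would identify the fibre of $\pi$ over $[\mathcal{E}_\alpha]$ with a single group orbit. Suppose $\alpha,\alpha'\in W$ satisfy $\mathcal{E}_{\alpha'}\cong\mathcal{E}_\alpha$ via some bundle isomorphism $\varphi$. Both bundles are symplectic by Proposition \ref{PropR1}(ii), and stability forces $\Hom(\mathcal{E}_\alpha,\mathcal{E}_\alpha^*)\cong\Hom(\mathcal{E}_\alpha,\mathcal{E}_\alpha)=\mathbb{C}$, so the symplectic form on $\mathcal{E}_\alpha$ is unique up to scalar; after rescaling $\varphi$ by a constant we may therefore assume $\varphi$ is a symplectic isomorphism. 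Since $\Hom(\mathcal{H},\mathcal{O}(-c))=0$ (because $c>a_i$ for all $i$), Theorem \ref{thm1.22} applies: $\varphi$ is induced by a symplectic automorphism $\beta_0\in\Symp(\mathcal{H},J_{6})$ together with a scalar automorphism $\alpha_0\in\Aut(\mathcal{O}(-c))=\mathbb{C}^*$, the two monads being related by $(\alpha_0,\beta_0)$. Hence every fibre of $\pi$ lies in a single orbit of $\Symp(\mathcal{H},J_{6})\times\mathbb{C}^*$ acting on $W$, and so $\dim(\text{general fibre})\leq\dim\Symp(\mathcal{H},J_{6})+1$.

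Combining the two steps and invoking Lemma \ref{lem2.11}, which gives $\dim\Symp(\mathcal{H},J_{6})=h^0(\mathcal{H}\otimes\mathcal{H})-h^0(\wedge^2\mathcal{H})$, I obtain
$$
\dim N\geq\dim W-\dim\Symp(\mathcal{H},J_{6})-1=h^0(\wedge^2\mathcal{H})-1+h^0(\mathcal{H}(c))-h^0(\mathcal{H}\otimes\mathcal{H}),
$$
which is the claimed inequality. The hard part will be the middle step: one must check that no bundle isomorphism escapes the symplectic framework, i.e. that uniqueness of the symplectic structure (from stability) genuinely reduces an arbitrary isomorphism to a symplectic one, and that the scalar automorphism of $\mathcal{O}(-c)$ appearing in Theorem \ref{thm1.22} accounts for exactly the one extra dimension beyond $\Symp(\mathcal{H},J_{6})$, so that the fibre dimension is neither over- nor under-estimated. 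The remaining ingredients—irreducibility and dimension of $W$, flatness of the family, and surjectivity of $\pi$—are routine.
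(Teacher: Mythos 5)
Your proof is correct and follows essentially the same route as the paper: parametrize the bundles by the open locus of admissible monad maps (you use $\alpha\in\Hom(\mathcal{H},\mathcal{O}(c))$ where the paper uses the equivalent $\phi\in\Hom(\mathcal{O}(-c),\mathcal{H})$ with $\alpha=\phi^{T}J_{6}$), rescale an arbitrary isomorphism of bundles to a symplectic one using simplicity, invoke Theorem \ref{thm1.22} to identify the fibres of the classifying map with orbits of $\Aut(\mathcal{O}(-c))\times\Symp(\mathcal{H},J_{6})$, and conclude by Lemma \ref{lem2.11}. There are no substantive differences from the paper's argument.
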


\begin{proof}
Set \textit{}
$$
X:=\{\phi\in \Hom(\mathcal{O}(-c),\dH)|\phi\textit{\;is\;injective,}\,\coker\;\phi\;is\;locally\;free,\,\phi^TJ_{2n+2}\; is\;surjective\},
$$
where $J_{6}:=\begin{pmatrix}
0 & I_{3}\\
-I_{3} & 0
\end{pmatrix}$ and $I_{3}$ is the $3\times 3$ identity matrix. 
Then $X$ is a Zariski open subset of $\Hom(\mathcal{O}(-c),\dH)$. For any $\phi\in X$, we can define the following monad
$$
\mathcal{O}(-c)\xrightarrow{\phi}\dH\xrightarrow{\phi^TJ_{6}}\mathcal{O}(c)
$$
Denote by $\dE_\phi$ the cohomology bundle of this monad.
By Theorem \ref{thm3.5}, $\dE_\phi$ is stable for $\phi\in X$.

By the canonical projection  $pr_2:X\times\mathbb{P}^{5}\rightarrow\mathbb{P}^{5}$,
we can construct a universal family of the monad on $X\times\mathbb{P}^{5}$
$$
pr^*_2\mathcal{O}(-c)\rightarrow pr^*_2\mathcal{H}\rightarrow pr^*_2\mathcal{O}(c)
$$
This induces a morphism
$$
\tau:X\rightarrow\mathcal{M}_{\mathbb{P}^{5}}(4;t_1,t_2,t_3,t_{4})
$$
By definition $N(c,a_1,a_2,a_{3})=\tau(X)$. Since $X$ is irreducible, then $N(c,a_1,a_2,a_{3})$ is an irreducible subvariety of $\mathcal{M}_{\mathbb{P}^{5}}(4;t_1,t_2,t_3,t_{4})$.

Next we will give an explicit description of the fiber of the morphism $\tau$. Let $(\dE_1,e_1)$ and $(\dE_2,e_2)$ be two isomorphic special generalized null correlation bundles in $N(c,a_1,a_2,a_{3})$, where $e_i:\mathcal{E}_i\rightarrow\dE^*_i$ with $e^T_i=-e_i$. Since $\dE_1\cong\dE_2$, then there exists an isomorphism $\varphi:\dE_1\rightarrow\dE_2$. Furthermore, there exists a constant $\lambda\in\mathbb{C}^*$ such that $\lambda\varphi:\dE_1\rightarrow\dE_2$ is a symplectic isomorphism. In fact, since $\dE_1$ is self-dual and simple, $\varphi^Te_2\varphi\in \Hom(\dE_1,\dE^*_1)$ and $e_1\in\Hom(\dE_1,\dE^*_1)$, then $\varphi^Te_2\varphi=e_1/\lambda^2$ for some nonzero complex number $\lambda$. Thus, $\lambda\varphi$ is a symplectic isomorphism. Therefore, if $\dE_1$ is isomorphic to $\dE_2$, we can always assume that there exists a symplectic isomorphism $\varphi:\dE_1\rightarrow\dE_2$. Applying Theorem \ref{thm1.22} to $(\dE_1,e_1)$ and $(\dE_2,e_2)$, $\varphi$ is induced by the unique isomorphism of monads
$$
\xymatrix{
	{\mathcal{O}}(-c)\ar[r]^{a_1}\ar[d]^{\alpha} & \mathcal{H}\ar[r]^{a^T_1b_1}\ar[d]^{\beta}                         & \mathcal{O}(c)\ar[d]^{\alpha^{-1}}                         \\
	\mathcal{O}(-c)\ar[r]^{a_2} & \mathcal{H}\ar[r]^{a^T_2b_2}  & \mathcal{O}(c)  
}
$$
where $\alpha\in \Aut(\mathcal{O}(-c))$ and $\beta\in\Symp(\dH,J_{6})$.

On the other hand, for any $\phi\in X$, $G:=\Aut(\mathcal{O}(-c))\times\Symp(\dH,J_{6})$ operates on the space of self-dual monads via
$$
\phi\mapsto \beta \phi\alpha^{-1}, 
$$
where $(\alpha,\beta)\in G$. All monads in the same orbit define isomorphic special generalized null correlation bundles $\dE$. In conclusion, at least set theoretically $\tau$ induces a bijection from $X/G$ onto $N(c,a_1,a_2,a_{3})$. Then the proposition follows from Lemma \ref{lem2.11}.
\end{proof}
\begin{remark}
In fact, we can compute the dimension of fibers of $\tau$. For any $\phi\in G$, the isotropy group $G_\phi=\{\pm(\id_{\mathcal{O}(-c)},\,\id_{\dH})\}$ by Lemma 3.2 in \cite{BS92}. In particular, all isotropy groups $G_\phi$ are of dimension zero. Therefore $\tau$ has fiber of constant dimension $1+\dim\Symp(\dH,J_{6})$. Then we have
$
\dim N(c,a_1,a_2,a_{3})=h^0(\wedge^2\dH)-1+h^0(\dH(c))-h^0(\dH\otimes\dH).
$
\end{remark}

Similar to Theorem 3.1 in \cite{Ein88}, we have the following theorem.
\begin{theorem}\label{thm4.9} For $a_1\geq a_2\geq a_3\geq 0$ and $c>5a_1$,\\
	(i) the closure of $N(c,a_1,a_2,a_{3})$ in $\mathcal{M}_{\mathbb{P}^{5}}(4;t_1,t_2,t_3,t_{4})$ is an irreducible component of $\mathcal{M}_{\mathbb{P}^{5}}(4;t_1,t_2,t_3,t_{4})$;\\
	(ii) if $p\in N(c,a_1,a_2,a_{3})$, then $p$ is a smooth point of $\mathcal{M}_{\mathbb{P}^{5}}(4;t_1,t_2,t_3,t_{4})$.
\end{theorem}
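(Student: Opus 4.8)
The plan is to reduce both assertions to one dimension comparison supplied by the deformation theory of stable bundles. For a stable vector bundle $\dE$ on a smooth projective variety, the Zariski tangent space to the moduli space $\mathcal{M}:=\mathcal{M}_{\mathbb{P}^5}(4;t_1,t_2,t_3,t_4)$ at the point $p=[\dE]$ is canonically $\textnormal{Ext}^1(\dE,\dE)=H^1(\dE nd\,\dE)$, so $\dim_{\mathbb{C}}T_p\mathcal{M}=h^1(\dE nd\,\dE)$. Every $\dE_p$ with $p\in N(c,a_1,a_2,a_3)$ is stable, since $c>5a_1\geq 2a_1+a_2$ puts us in the range of Theorem \ref{thm3.5}. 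By Proposition \ref{thm4.1}(ii) one has $h^1(\dE nd\,\dE_p)=h^0(\wedge^2\dH)-1+h^0(\dH(c))-h^0(\dH\otimes\dH)$, while the remark following Proposition \ref{thm4.4} identifies the very same quantity with $\dim N(c,a_1,a_2,a_3)$. The crucial equality I would extract is therefore
$$
\dim N(c,a_1,a_2,a_3)=h^1(\dE nd\,\dE_p)=\dim_{\mathbb{C}}T_p\mathcal{M}\qquad\text{for every }p\in N(c,a_1,a_2,a_3).
$$

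With this in hand I would run the standard squeeze at an \emph{arbitrary} point $p\in N(c,a_1,a_2,a_3)$. Let $\overline{N}$ be the closure of $N(c,a_1,a_2,a_3)$ in $\mathcal{M}$, an irreducible subvariety by Proposition \ref{thm4.4}. Since $\overline{N}\subseteq\mathcal{M}$ is irreducible and passes through $p$, the surjection $\mathcal{O}_{\mathcal{M},p}\twoheadrightarrow\mathcal{O}_{\overline{N},p}$ forces $\dim_p\mathcal{M}\geq\dim\overline{N}$, while the general inequality $\dim\mathcal{O}_{\mathcal{M},p}\leq\dim_{\mathbb{C}}\mathfrak{m}_p/\mathfrak{m}_p^2$ for Noetherian local rings gives $\dim_p\mathcal{M}\leq\dim_{\mathbb{C}}T_p\mathcal{M}$. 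Hence
$$
\dim\overline{N}\leq\dim_p\mathcal{M}\leq\dim_{\mathbb{C}}T_p\mathcal{M}=h^1(\dE nd\,\dE_p)=\dim\overline{N},
$$
so every term is equal. Thus $\dim\mathcal{O}_{\mathcal{M},p}=\dim_{\mathbb{C}}\mathfrak{m}_p/\mathfrak{m}_p^2$, meaning $\mathcal{O}_{\mathcal{M},p}$ is regular and $p$ is a smooth point of $\mathcal{M}$; this is part (ii). For part (i), a smooth point lies on a unique irreducible component $Z$ of $\mathcal{M}$, and $\overline{N}\subseteq Z$ because $\overline{N}$ is irreducible and contains $p$. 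As $\dim\overline{N}=\dim_p\mathcal{M}=\dim Z$ with $\overline{N}$ closed and irreducible inside the irreducible $Z$ of the same dimension, I conclude $\overline{N}=Z$, an irreducible component of $\mathcal{M}$.

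I do not expect a genuine obstacle here: the heavy cohomological computation is already discharged in Propositions \ref{thm4.1} and \ref{thm4.4}, so what remains is the formal argument above. The one point demanding care is that $h^2(\dE nd\,\dE)$ need not vanish (Proposition \ref{thm4.1}(i)), so the familiar criterion ``obstruction space zero, hence smooth'' is unavailable. Smoothness must instead be forced by the explicit family: because $N(c,a_1,a_2,a_3)$ already realizes the full Zariski tangent-space dimension at every one of its points, the squeeze yields regularity of $\mathcal{O}_{\mathcal{M},p}$ irrespective of $H^2(\dE nd\,\dE)$. I would therefore be careful to invoke the pointwise equality $\dim N=h^1(\dE nd\,\dE_p)$ at each $p$ rather than only at a general point, since both parts of the conclusion are asserted for all $p\in N(c,a_1,a_2,a_3)$.
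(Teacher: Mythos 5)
Your proposal is correct and follows essentially the same route as the paper: both rest on the identification $T_p\mathcal{M}\cong H^1(\dE nd\,\dE_p)$ together with Propositions \ref{thm4.1} and \ref{thm4.4}, which force $\dim N(c,a_1,a_2,a_3)=h^1(\dE nd\,\dE_p)$ at every point, and then conclude by the standard dimension squeeze. The paper compresses the final step into ``the theorem follows immediately,'' whereas you spell out the regularity of $\mathcal{O}_{\mathcal{M},p}$ and the component argument; your remark that smoothness cannot be deduced from vanishing of $H^2(\dE nd\,\dE)$ is also the correct reading of why the explicit family is needed.
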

\begin{proof}
(i)\&(ii)	Let $\dE_p$ be the special generalized null correlation bundle corresponding to $p$. Since $H^1(\dE nd\;\dE_p)$ is the Zariski tangent space at of $\mathcal{M}_{\mathbb{P}^{5}}(4;t_1,t_2,t_3,t_{4})$ at $p$, then $h^1(\dE nd\;\dE_p)\geq \dim N(c,a_1,a_2,a_{3})$. By Proposition \ref{thm4.1} and \ref{thm4.4}, $h^1(\dE nd\;\dE_p)=\dim N(c,a_1,a_2,a_{3})$. Then the theorem follows immediately. 
\end{proof}

\begin{remark}
Recall from \cite{KO89,LO87,N79}, we know that the coarse moduli space of simple vector bundles (in analytic sense) on complex projective space exists as a (not necessarily Hausdorff) analytic space. And for any $p$ in the moduli space, the dimension of the Zariski tangent space of the moduli space at $p$ is equal to $h^1(\dE nd\;\dE_p)$, where $\dE_p$ is the simple bundle corresponding to $p$. Since we only use the simplicity of $\dE$, $\dF$ and $\dG$ in the proofs of Proposition \ref{thm4.1} and \ref{thm4.4}, then one can establish similar results for moduli spaces of simple special generalized null correlation bundles by the same argument. 	 
\end{remark}

\section{Application}

In this section, we assume that $\dE$ is a special generalized null correlation bundle on $\mathbb{P}^5$ with $c>5a_1$. Then $\dE$ is stable.
\begin{lemma}\label{lem5.5}
 $c_1(\dE)=c_3(\dE)=0$, $c_2(\dE)=c^2-(a^2_1+a^2_2+a^2_3)$, $c_4(\dE)=c^4-c^2(a^2_1+a^2_2+a^2_3)+(a^2_1a^2_2+a^2_1a^2_3+a^2_2a^2_3)$.	
\end{lemma}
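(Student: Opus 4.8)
The plan is to read off the total Chern class of $\dE$ directly from the monad (\ref{eqn13}) using the Whitney-type formula recorded in Remark \ref{rmk2.21}. Since $\dE$ is the cohomology bundle of $\mathcal{O}(-c)\xrightarrow{\beta}\dH\xrightarrow{\alpha}\mathcal{O}(c)$ on $\mathbb{P}^5$, Remark \ref{rmk2.21} gives
$$
c(\dE)=c(\dH)\,c(\mathcal{O}(-c))^{-1}\,c(\mathcal{O}(c))^{-1}
$$
in the Chow ring $A^*(\mathbb{P}^5)\cong\mathbb{Z}[h]/(h^6)$, where $h$ is the hyperplane class. I would then evaluate each factor separately.

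For $\dH=(\oplus_{i=1}^3\mathcal{O}(a_i))\oplus(\oplus_{i=1}^3\mathcal{O}(-a_i))$ the line bundles come in dual pairs, so
$$
c(\dH)=\prod_{i=1}^3(1+a_ih)(1-a_ih)=\prod_{i=1}^3(1-a_i^2h^2).
$$
Likewise the two rank-one factors combine, $c(\mathcal{O}(-c))^{-1}c(\mathcal{O}(c))^{-1}=(1-ch)^{-1}(1+ch)^{-1}=(1-c^2h^2)^{-1}$, which expands as $1+c^2h^2+c^4h^4$ after inverting in $\mathbb{Z}[h]/(h^6)$. The key structural observation is that both factors are series in $h^2$, whence $c(\dE)$ is itself a polynomial in $h^2$; in particular the odd Chern classes $c_1(\dE)$ and $c_3(\dE)$ vanish. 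Alternatively, this vanishing follows at once from the self-duality $\dE\cong\dE^*$ of Proposition \ref{PropR1}, since $c_i(\dE^*)=(-1)^ic_i(\dE)$ and $A^*(\mathbb{P}^5)$ is torsion-free.

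It then remains to multiply out, in $\mathbb{Z}[h]/(h^6)$,
$$
c(\dE)=\bigl(1-e_1h^2+e_2h^4\bigr)\bigl(1+c^2h^2+c^4h^4\bigr),
$$
where $e_1=a_1^2+a_2^2+a_3^2$ and $e_2=a_1^2a_2^2+a_1^2a_3^2+a_2^2a_3^2$; note the degree-six term $-a_1^2a_2^2a_3^2\,h^6$ of $c(\dH)$ has already been dropped since $h^6=0$. Reading off the coefficient of $h^2$ gives $c_2(\dE)=c^2-e_1$, and the coefficient of $h^4$ gives $c_4(\dE)=c^4-c^2e_1+e_2$, which are exactly the asserted formulas; the coefficient of $h^5$ vanishes automatically, consistent with $\rk\,\dE=4$.

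I expect no genuine obstacle in this lemma: it is a direct Chern-class computation. The only points requiring care are the correct inversion of the total Chern classes of $\mathcal{O}(\pm c)$ and the observation that the dual-pairing structure of $\dH$ forces $c(\dE)$ to be a series in $h^2$, which is precisely what makes the odd Chern classes vanish and reduces the whole statement to reading two coefficients.
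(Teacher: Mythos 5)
Your computation is correct and matches the paper's approach: the paper derives the same identity $c(\dE)=c(\dH)\,c(\mathcal{O}(-c))^{-1}c(\mathcal{O}(c))^{-1}$ by applying the Whitney formula to the two exact sequences \eqref{seqB} and \eqref{seqD} from the display, which is exactly the content of Remark \ref{rmk2.21} that you invoke. The expansion in $\mathbb{Z}[h]/(h^6)$ and the parity observation forcing $c_1=c_3=0$ are all as intended.
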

\begin{proof}
The lemma follows from \eqref{seqB} and \eqref{seqD} easily.
\end{proof}
\begin{lemma}\label{lem5.8}
	For any positive integer $N$, there exists a pair of positive integers $(\lambda,\zeta)$ such that the following system of equations  
	\[
	\begin{cases}
	\lambda= u^2+v^2+w^2\\
	\zeta= u^2\cdot v^2+u^2\cdot w^2+v^2\cdot w^2
	\end{cases}
	\]
	has at least N distinct integral solutions satisfying $u\geq v\geq w\geq0$.
\end{lemma}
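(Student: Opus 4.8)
My plan is to cut down to a one-parameter sub-family of triples on which the two equations collapse into a single classical Diophantine equation. The key observation is that imposing the linear relation $u=v+w$ (forcing one entry to be the sum of the other two) pins down the scale-invariant ratio $\zeta/\lambda^2$, so that a single integer simultaneously controls both $\lambda$ and $\zeta$.

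Concretely, I would first verify the elementary identity: if $u=v+w$ then, setting $m:=v^2+vw+w^2$,
\[
\lambda=(v+w)^2+v^2+w^2=2\,(v^2+vw+w^2)=2m,
\]
\[
\zeta=(v+w)^2(v^2+w^2)+v^2w^2=(v^2+vw+w^2)^2=m^2 .
\]
Thus every triple $(u,v,w)=(v+w,\,v,\,w)$ with $v\ge w\ge 0$ lies in the fibre over $(\lambda,\zeta)=(2m,m^2)$, and, crucially, all such triples sharing one value of $m$ produce the \emph{same} pair $(\lambda,\zeta)$. Since the largest entry of $(v+w,v,w)$ is $v+w$ and its two smaller entries recover $\{v,w\}$, distinct pairs $(v,w)$ with $v\ge w\ge 0$ give genuinely distinct solutions in the sense required by the lemma.

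It then remains to make $m$ admit many representations by the Loeschian form $v^2+vw+w^2$, which is the norm form of the Eisenstein integers $\mathbb{Z}[\omega]$. The number of representations of $m$ is the classical quantity $6\sum_{d\mid m}\left(\tfrac{-3}{d}\right)$; taking $m=p_1\cdots p_k$ a product of distinct primes $p_j\equiv 1\pmod 3$ makes this equal to $6\cdot 2^{k}$, so the number of essentially distinct representations with $v\ge w\ge 0$ grows without bound (of order $2^{k}$) and exceeds any prescribed $N$ once $k$ is large. Choosing such an $m$ and setting $(\lambda,\zeta)=(2m,m^2)$ then produces at least $N$ distinct triples, which proves the lemma; I would only need to check the elementary representation count and the (trivial) distinctness bookkeeping.

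The conceptual heart, and the only genuine obstacle, is spotting the collapsing constraint $u=v+w$. A priori one must satisfy two independent symmetric conditions, and the naive strategies fail: a pigeonhole count over a box $\{0\le w\le v\le u\le B\}$ produces more potential pairs $(\lambda,\zeta)$ than triples, and fixing $\lambda$ while hoping $\zeta$ repeats does not work because the generic fibre of $(u,v,w)\mapsto(\lambda,\zeta)$ is a curve carrying very few integer points (indeed distinct solutions must differ in every coordinate, so coincidences are rare for random $(\lambda,\zeta)$). Restricting to the line $u=v+w$ linearises the whole problem to a single Eisenstein-norm equation, after which the construction is completely classical.
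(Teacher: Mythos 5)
Your proof is correct, but it takes a genuinely different (and arguably cleaner) route than the paper. The paper first converts $\zeta$ into $u^4+v^4+w^4$ via the Newton-type identity, then invokes a two-parameter polynomial identity of Piezas expressing $\sum \ell_i(x,y)^k$ for $k=2,4$ as a multiple of $(x^2+3y^2)^{k/2}$, and finally produces many integral points on $x^2+3y^2=M$ by clearing denominators of rational points on the unit conic. You instead restrict directly to the hyperplane $u=v+w$, where both symmetric functions collapse onto the single Loeschian invariant $m=v^2+vw+w^2$ (giving $\lambda=2m$, $\zeta=m^2$), and then count representations of $m$ by the norm form of $\mathbb{Z}[\omega]$ using the divisor formula $6\sum_{d\mid m}\left(\tfrac{-3}{d}\right)$ with $m$ a product of $k$ distinct primes $\equiv 1 \pmod 3$; since no representation of such an $m$ has $w=0$ or $v=w$, each of the $6\cdot 2^k$ representations lies in a free orbit of the order-$12$ automorphism group, whose fundamental domain is exactly the sector $v\ge w\ge 0$, so you get precisely $2^{k-1}$ admissible triples. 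Your identities check out, and the distinctness bookkeeping is trivial as you say. It is worth noting that your constraint is in fact the hidden mechanism behind the paper's identity: the three linear forms $ax+(a+2b)y$, $bx-(2a+b)y$, $(a+b)x-(a-b)y$ satisfy ``third $=$ first $+$ second,'' so the paper's parametrized family lives on your locus $u=v+w$; your argument isolates the intrinsic reason that identity exists. What your approach buys is self-containedness (no citation of the Piezas identity) and an explicit quantitative count $2^{k-1}$; what the paper's buys is avoiding the representation theory of the Loeschian form in favor of the more elementary conic-with-a-rational-point argument. Either is acceptable; if you want to keep your route fully elementary you could replace the class-number-style count by the same trick the paper uses, namely pulling back many rational points on $v^2+vw+w^2=1$ to integral representations of a common value $m$.
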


\begin{proof}
	Since
	$$
	u^2\cdot v^2+u^2\cdot w^2+v^2\cdot w^2=\frac{1}{2}((u^2+v^2+w^2)^2-(u^4+v^4+w^4))
	$$
	it is enough to show the following claim is true. 
	\begin{claim}\label{claim5.9}
		For any positive integer $N$, there exist a pair of positive integers $(A,B)$ such that the following system of equations 
		\[
		\begin{cases}
		4A= u^2+v^2+w^2\\
		4B= u^4+v^4+w^4
		\end{cases}
		\]
		has at least N distinct integral solutions satisfying $u\geq v\geq w\geq0$.
	\end{claim}
	If Claim \ref{claim5.9} is true, we can let $\lambda=4A$ and $\zeta=((4A)^2-4B)/2=8A^2-2B$. Then the lemma follows. For the proof of Claim \ref{claim5.9}, we need to use the following identities in \cite{Piezas}.
	For $k=2,4$,
	$$
	(ax+(a+2b)y)^k+(bx-(2a+b)y)^k+((a+b)x-(a-b)y)^k=(a^k+b^k+(a+b)^k)(x^2+3y^2)^{k/2}.
	$$
	For general choices of $(a,b)$,
	$
	\begin{pmatrix}
	a && a+2b\\
	b && -2a-b 
	\end{pmatrix}
	$
	is nondegenerate. Now we fix such $(a,b)$. Since $x^2+3y^2=1$ has infinitely many rational solutions, then for any positive integer $N$, there exists a positive integer $M$ such that $x^2+3y^2=M$ has at least $N$ distinct integral solutions. Now we can just let $A$ be $(a^2+b^2+(a+b)^2)M$ and $B$ be $4(a^4+b^4+(a+b)^4)M^2$. Then Claim \ref{claim5.9} follows. The proof of the lemma is complete.
\end{proof}
\begin{proposition}\label{prop6.8}
	Suppose that $\mathcal{M}_{\mathbb{P}^5}(4;0,s,0,t)=X_1\cup X_2\cdots\cup X_{m_{s,t}}$ where $X_i$'s are distinct irreducible components of $\mathcal{M}_{\mathbb{P}^5}(4;0,s,0,t)$. Then there exists a sequence of pairs $\{(s_i,t_i)\in\mathbb{Z}^2|i=1,2,\cdots\}$ such that $\lim_{i\rightarrow+\infty} m_{s_i,t_i}=+\infty$. 
\end{proposition}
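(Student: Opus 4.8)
The plan is to place arbitrarily many of the irreducible components $\overline{N(c,a_1,a_2,a_3)}$ into one moduli space $\mathcal{M}_{\mathbb{P}^5}(4;0,s,0,t)$ by varying $(a_1,a_2,a_3)$ while holding the Chern classes fixed. By Lemma \ref{lem5.5} a special generalized null correlation bundle with data $(c,a_1,a_2,a_3)$ has $c_1=c_3=0$ together with
$$
c_2=c^2-\sigma_1,\qquad c_4=c^4-c^2\sigma_1+\sigma_2,
$$
where $\sigma_1=a_1^2+a_2^2+a_3^2$ and $\sigma_2=a_1^2a_2^2+a_1^2a_3^2+a_2^2a_3^2$. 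Hence, for a \emph{fixed} $c$, prescribing $(c_2,c_4)=(s,t)$ amounts to prescribing the pair $(\sigma_1,\sigma_2)$. Given a positive integer $N$, I would apply Lemma \ref{lem5.8} with $(u,v,w)=(a_1,a_2,a_3)$ to obtain a pair $(\lambda,\zeta)$ for which $\sigma_1=\lambda,\ \sigma_2=\zeta$ has at least $N$ distinct solutions with $a_1\geq a_2\geq a_3\geq 0$. Every such solution satisfies $a_1\leq\sqrt{\lambda}$, so I would pick an integer $c>5\sqrt{\lambda}$; then $c>5a_1\geq 2a_1+a_2$ holds for all of them, and by Theorem \ref{thm3.5} the associated bundles are stable. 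Putting $s=c^2-\lambda$ and $t=c^4-c^2\lambda+\zeta$, each solution produces a stable special generalized null correlation bundle on $\mathbb{P}^5$ with Chern classes $(0,s,0,t)$, and by Theorem \ref{thm4.9} the closure $\overline{N(c,a_1,a_2,a_3)}$ is an irreducible component of $\mathcal{M}_{\mathbb{P}^5}(4;0,s,0,t)$.

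What remains --- and what I expect to be the main obstacle --- is to show that the $\geq N$ components coming from distinct solutions are pairwise \emph{distinct}. For this I would use the graded cohomology module as an isomorphism invariant of the bundle. By Proposition \ref{prop2.5}(i), $H^1_*(\dE)\cong M\otimes S(c)$, where $M=S/\langle f_1,f_2,f_3,g_1,g_2,g_3\rangle$ is the Artinian complete intersection whose defining forms have degrees $c-a_1,c-a_2,c-a_3,c+a_1,c+a_2,c+a_3$, so that
$$
H_M(t)=\frac{\prod_{j=1}^{3}(1-t^{\,c-a_j})\,(1-t^{\,c+a_j})}{(1-t)^{6}}.
$$
An isomorphism of bundles induces a graded $S$-module isomorphism on $H^1_*$, so the Hilbert function of $M$ (recovered from that of $H^1_*(\dE)$ via the common shift by $c$) is an invariant of $\dE$. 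The numerator of $H_M(t)$ determines the multiset of generator degrees by uniqueness of factorization into cyclotomic polynomials, equivalently by M\"obius inversion over the divisibility order; since $c$ is fixed and these degrees occur in the pairs $c\pm a_j$, the multiset $\{a_1,a_2,a_3\}$ is thereby determined.

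Consequently distinct solutions yield non-isomorphic bundles, so the subsets $N(c,a_1,a_2,a_3)\subset\mathcal{M}_{\mathbb{P}^5}(4;0,s,0,t)$ attached to distinct solutions are pairwise disjoint. Each such subset is constructible and dense in its irreducible component, and two equal irreducible components would force their dense subsets to intersect; hence the components are pairwise distinct and $m_{s,t}\geq N$. Finally, for each $N$ this construction yields a pair $(s_N,t_N)$ with $m_{s_N,t_N}\geq N$; choosing $(s_i,t_i):=(s_{N_i},t_{N_i})$ along any increasing sequence $N_i\to\infty$ gives $\lim_{i\to\infty}m_{s_i,t_i}=+\infty$. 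The only genuinely delicate point is the invariant-theoretic recovery of $(a_1,a_2,a_3)$ from the Hilbert function; the rest is bookkeeping with Lemmas \ref{lem5.5}, \ref{lem5.8} and Theorem \ref{thm4.9}.
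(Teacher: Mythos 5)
Your proof is correct and takes essentially the same route as the paper, whose own proof simply cites Theorem \ref{thm4.9}, Lemma \ref{lem5.5} and Lemma \ref{lem5.8}: fix $c$, use Lemma \ref{lem5.8} to produce $N$ triples $(a_1,a_2,a_3)$ with the same $(\sigma_1,\sigma_2)$ and hence the same Chern classes, and invoke Theorem \ref{thm4.9}. The one substantive addition is your Hilbert-function argument (via $H^1_*(\dE)\cong M\otimes S(c)$ and recovery of the generator degrees of the complete intersection $M$) showing that distinct triples yield non-isomorphic bundles and hence pairwise distinct components; the paper leaves this point implicit, and your argument for it is sound.
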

\begin{proof}
	The proposition follows from Theorem \ref{thm4.9}, Lemma \ref{lem5.5} (i) and Lemma \ref{lem5.8} immediately.
\end{proof}
\begin{remark}
By Bohnhorst and Spindler's theorems on the smoothness and irreducibility of moduli spaces of stable homological dimension 1 vector bundles on projective spaces (see Theorem 3.5 in \cite{BS92}), one can show that the number of irreducible components of moduli space of stable rank 3 (resp. 5) bundles on $\mathbb{P}^3$ (resp. $\mathbb{P}^5$) with fixed Chern classes can be arbitrarily high. In fact, one only needs to consider the moduli space of stable vector bundles with some fixed Chern classes which contains the isomorphism class of $\dG$ on $\mathbb{P}^3$ and $\mathbb{P}^5$ respectively, where $\dG$ is as defined in the display of monad (\ref{eqn13}). 
\end{remark}
\section*{Acknowledgement}

The author is greatly indebted to his advisor, Lawrence Ein, for suggesting the problem and many stimulating conversations. The author wishes to express his thanks to Klaus Hulek and Aroor P. Rao for answering his questions on symplectic vector bundles. The author would also like to thank Izzet Coskun, Ben Gould, Fumiaki Suzuki and Tian Wang for helpful discussions and suggestions.
%%%%%%%%%%%%%%%%%%%%%%%%%%%%%%%%%%%%%%%%%%%%%%%%%
\bibliographystyle{abbrv}
\bibliography{reference}

\end{document}